\title{Sparse Isotropic Regularization for Spherical Harmonic\\
 Representations of Random Fields on the Sphere}
\author{Q.~T.~Le Gia, I.~H.~Sloan, R.~S.~Womersley \\ 
School of Mathematics and Statistics\\
The University of New South Wales, Sydney, NSW 2052\\
Australia
\and Yu Guang Wang\\
Department of Mathematics and Statistics\\
La Trobe University, Bundoora 3086 VIC\\ 
Australia.}
\date{}
\begin{document}
\maketitle
\begin{abstract}
This paper discusses sparse isotropic regularization for a random field on
the unit sphere $\sph{2}$ in $\mathbb{R}^{3}$, where the field is expanded
in terms of a spherical harmonic basis. A key feature is that the norm used in
the regularization term, a hybrid of the $\ell_{1}$ and $\ell_2$-norms, is chosen so that the
regularization preserves isotropy, in the sense that if the observed
random field is strongly isotropic then so too is the regularized field.
The Pareto efficient frontier is used to display the trade-off between the sparsity-inducing norm and the data discrepancy term, in order to help in the choice of a suitable regularization parameter.
 A numerical example using Cosmic Microwave Background (CMB) data is considered in detail.
 In particular, the numerical results explore the trade-off between regularization and discrepancy,
and show that substantial sparsity can be achieved along with small $L_{2}$ error.
\end{abstract}


\section{Introduction}

This paper presents a new algorithm for the sparse regularization of a real-valued random field $T$ on the sphere, with the regularizer taken to be a novel norm (a hybrid of $\ell_1$ and $\ell_2$ norms) imposed on the coefficients $\coesh$ of the spherical harmonic decomposition,
\begin{equation*}
\RF(\PT{x})=\sum_{\ell=0}^\infty\sum_{m=-\ell}^\ell \coesh \shY(\PT{x}),
\quad \PT{x}\in \mathbb{S}^2.
\end{equation*}
Here $\shY$ for $m=-\ell,\ldots,\ell$ is a (complex) orthonormal
basis for the space of homogeneous harmonic polynomials of degree $\ell$ in $\mathbb{R}^3$, restricted to the unit sphere
$\mathbb{S}^2:=\{\PT{x}\in\mathbb{R}^3:|\PT{x}|=1\}$, with $|\cdot|$
denoting the Euclidean norm in $\mathbb{R}^3$.

Random fields on the sphere have recently attracted much attention from both mathematicians \cite{MaPe2011} and astrophysicists. In particular, the satellite data used to form the map of the Cosmic Microwave Background (see \cite{Planck2016I,Planck2016IX,Planck2016XVI}), is usually viewed as, to a good approximation, a single realization of an isotropic Gaussian random field, after correction for the obscured portion of the map near the galactic plane.

Sparse regularization of data (i.e. a regularized approximation in which many coefficients in an expansion are zero) is another topic that has recently attracted great attention, especially in compressed sensing and signal analysis, see for example
\cite{CanRT2006a,DauFL2008,Don2006}. In the context of CMB the use
of sparse representations is somewhat controversial, see for example
\cite{StDoFaRa2013}, but nevertheless has often been discussed, especially in the context of inpainting to correct for the obscuring effect of our galaxy near the galactic plane.

In a recent paper, Cammarota and Marinucci \cite{CamMar15} considered a particular $\ell_1$-regularization problem based on spherical harmonics, and showed that if the true field is both Gaussian and isotropic (the latter meaning that the underlying law is invariant under rotation), then the resulting regularized solution is neither Gaussian nor isotropic. The problem of anisotropy has also been pointed out in sparse inpainting on the sphere \cite{Fe_etal2014}.

The scheme analyzed in \cite{CamMar15} obtains a regularized field as the minimizer of
\begin{equation}\label{eq:CRminimiser}
\frac{1}{2}\|\RF-\RFo\|_{L_2(\mathbb{S}^2)}^2+
\lambda\sum_{\ell=0}^\infty\sum_{m=-\ell}^\ell |\coesh|,
\end{equation}
where $\RFo$ is the observed field, and $\lambda\ge0$ is a regularization parameter. Behind the non-preservation of isotropy in this scheme lies a more fundamental problem, namely that the regularizer in \eqref{eq:CRminimiser} is not invariant under rotation of the coordinate axes. For this reason the regularized field, and even the sparsity pattern, will in general depend on the choice of coordinate axes.

The essential point is that for a given $\ell\ge 1$ the sum
$\sum_{m=-\ell}^\ell|\coesh|^2$ is rotationally invariant, while the sum $\sum_{m=-\ell}^\ell|\coesh|$ is not. For convenience the rotational invariance property is proved in the next section.

A simple example might be illuminating.  Suppose that a particular
realization of the field happens to take the (improbable!) form
$$
\RF(\PT{x})=\PT{x}\cdot\PT{p}, \quad \PT{x}\in \mathbb{S}^2
$$
for some fixed point $\PT{p}$ on the celestial unit sphere.  If the $z$ axis is chosen so that $\PT{p}$ is at the north pole, then $T(\PT{x})=\cos\theta$ where $\theta$ is the usual polar angle, and so $\RF(\PT{x})=\alpha\shY[1,0](\PT{x})$, where $\alpha=\sqrt{4\pi/3}$ (since $\shY[1,0](\theta,\phi) = \sqrt{3/(4\pi)} \cos\theta$). Thus with this choice we have $\coesh[1,0]= \alpha$, and all other coefficients are zero. On the other
hand, if the axes are chosen so that $\PT{p}$ lies on the $x$ axis then the field has the polar coordinate representation
$$
\RF(\PT{x})=\sin\theta \cos \phi
=\frac{1}{2} (e^{i\phi} +e^{-i\phi})\sin \theta
= \frac{\alpha}{\sqrt{2}}(\shY[1,1]-\shY[1,-1]),
$$
so that now the only non-zero coefficients are $\coesh[1,1]=-\coesh[1,-1]=\alpha/\sqrt{2}$. Note that the sum of the absolute values in the second case is larger than that in the first case by a factor of $\sqrt{2}$.  (Note that even the choice of complex basis for the spherical harmonics affects the sum of the absolute values of the coefficients; but not, of course, the sum of the squares of the absolute values.)

With this motivation, in this paper we replace the regularizer in
\eqref{eq:CRminimiser} by one that is manifestly rotationally invariant:
in our scheme the regularized field is the minimizer of
\begin{equation}
\label{eq:our_minimiser}
\frac{1}{2}\norm{\RF-\RFo}{\Lp{2}{2}}^2+
\lambda\sum_{\ell=0}^\infty\beta_\ell \bigg(\sum_{m=-\ell}^\ell |\coesh|^2\bigg)^{1/2},
\end{equation}
where the $\beta_\ell$ are at this point arbitrary positive numbers
normalized by $\beta_0 =1$.
With an appropriate choice of $(\beta_\ell)_{\ell\in\Nz}$ and $\lambda$
our regularized solution will turn out to be sparse, but with
the additional property of either preserving all or discarding all the coefficients $\coesh$ of a given degree $\ell$.

It is easily seen that the regularized field, that is the minimizer of \eqref{eq:our_minimiser} for a given observed field $\RFo$, takes the form
\begin{equation*}
	\RFr(\PT{x}) = \sum_{\ell=0}^{\infty}\sum_{m=-\ell}^{\ell}\coeshr \shY(\PT{x}),\quad \PT{x}\in\sph{2},
\end{equation*}
where (see Proposition~\ref{prop:reg.sol})
\begin{equation*}
   \coeshr := \left\{\begin{array}{ll}
	\displaystyle \left(1 - \frac{\lambda\beta_{\ell}}{\Alo}\right) \coesho, & \mbox{if~}\Alo>\lambda\beta_{\ell},\\
	0, & \mbox{if~}\Alo\le\lambda\beta_{\ell},
 \end{array}\right.
\end{equation*}
where
\begin{equation}\label{eq:Alo}
  \Alo := \left(\sum_{m=-\ell}^\ell | \coesho |^2 \right)^\frac{1}{2},
  \quad \ell\ge0.
\end{equation}

Since the resulting sparsity pattern depends entirely on the sequence of ratios $\Alo/(\lambda\beta_{\ell})$ for $\ell\ge0$, it is clear that in any application of the present
regularization scheme, the choices of the sequence $(\beta_{\ell})_{\ell\ge0}$
and the parameter $\lambda$ are crucial. In this paper we shall discuss these choices
in relation to a particular dataset from the cosmic microwave background (CMB) project,
first choosing $\beta_{\ell}$
to match the observed decay of the $\Alo$, and finally choosing the parameter $\lambda$.
We shall see that the resulting sparsity can vary greatly as $\lambda$ varies for
given $(\beta_{\ell})_{\ell\in\Nz}$, with little
change to the $L_{2}$ error of the approximation.

Because of its very nature, the regularized solution has in general
a smaller norm than the observed field. We therefore explore the option of scaling the
regularized field so that both the observed and regularized fields have the same $L_{2}$ norm.

The paper is organized as follows: In Section~\ref{sec:prelim} we review key definitions and
properties of isotropic random fields on the unit sphere, the choice of norm and the regularization model. In Section~\ref{sec:regsol} we give the analytic solution to the regularization model.
Section~\ref{sec:iso} proves that the regularization scheme produces a strongly isotropic field when the observed field is strongly isotropic.
Section~\ref{sec:err} estimates the approximation error of the sparsely regularized random field from the observed random field, and for a given error provides an upper estimate of the regularization parameter $\lambda$.
In Section~\ref{sec:scaling}, we consider the option of scaling the regularized field
so that the $L_2$-norm is preserved.
In Section~\ref{sec:num} we describe the numerical experiments that illustrate the proposed
regularization algorithm. In particular,
Section~\ref{sec:beta} considers the choice of the scaling parameters in the norm, while
Section~\ref{sec:eff} illustrates use of the Pareto efficient frontier to help guide the choice of regularization parameter. Finally,
Section~\ref{sec:CMB} uses the CMB data to illustrate the regularization scheme.


\section{Preliminaries}\label{sec:prelim}

\subsection{Rotational invariance}
In this subsection, randomness plays no role.
Let $\sph{2}$ be the unit sphere in the Euclidean space $\R^3$. Let $\Lp{2}{2}:=L_{2}(\sph{2},\sigma)$ denote the space
of complex-valued square integrable functions on $\sph{2}$ with the surface measure $\sigma$ on $\sph{2}$ satisfying $\sigma(\sph{2})=4\pi$, endowed with the inner product
$\int_{\sph{2}}f(\PT{x})\conj{g(\PT{x})}\IntDiff[]{x}$ for $f,g\in\Lp{2}{2}$ and with
induced $L_2$-norm $\norm{f}{\Lp{2}{2}}=\sqrt{\int_{\sph{2}}|f(\PT{x})|^2\IntDiff[]{x}}$.
The (complex-valued) spherical
harmonics $\{\shY : \ell=0,1,2,\ldots; m=-\ell,\ldots,\ell\}$, which are the eigenfunctions of the Laplace-Beltrami operator for the sphere, form a complete orthonormal basis for $\Lp{2}{2}$.
There are various spherical harmonic definitions. This paper uses the basis as in \cite{Liboff2003}, which is widely used in physics.

A function $f\in \Lp{2}{2}$
can be expanded in terms of a Fourier-Laplace series
\begin{equation}\label{eq:Fseries}
   f \sim \sum_{\ell=0}^\infty \sum_{m=-\ell}^{\ell} \Fcoe{f} \shY, \text{ with }
            \widehat{f}_{\ell,m} := \int_{\sph{2}} f(\PT{x}) \conj{\shY(\PT{x})}
            \IntDiff[]{x},
\end{equation}
with $\sim$ denoting the convergence in the $\Lp{2}{2}$ sense. The $\Fcoe{f}$ are called
the \emph{Fourier coefficients} for $f$ under the Fourier basis $\shY$.
Parseval's theorem states that
\[
 \norm{f}{\Lp{2}{2}}^2 = \sum_{\ell=0}^\infty \sum_{m=-\ell}^{\ell} |\Fcoe{f}|^2, \qquad f \in \Lp{2}{2}.
\]

As promised in the Introduction, we now show that the sum over $m$ of the squared absolute values of the Fourier coefficients is rotationally invariant.

Let $\RotGr[3]$ be the rotation group on $\R^3$. For a
given rotation $\rho \in \RotGr[3]$ and a given function $f\in
L^2(\sph{2})$, the linear operator $\rtop$ on $\Lp{2}{2}$ associated with
the rotation $\rho$ is defined by
\begin{equation*}
  \rtop f(\PT{x}):=f(\rho^{-1}\PT{x}), \qquad \PT{x} \in \sph{2}.
\end{equation*}
The rotated function $\rtop f$ is essentially the same function as $f$,
but expressed with respect to a coordinate system rotated by $\rho$. The operators $\rtop$ form a representation of the group $\RotGr[3]$, in that
\begin{align*}
	(\rtop[\rho_{1}]\rtop[\rho_{2}])f(\PT{x})
	&= \rtop[\rho_{1}](\rtop[\rho_{2}]f)(\PT{x}) = (\rtop[\rho_{2}]f)(\rho_{1}^{-1}\PT{x}) = f(\rho_{2}^{-1}\rho_{1}^{-1}\PT{x})\\
&=f((\rho_{1}\rho_{2})^{-1}\PT{x})=\rtop[\rho_{1}\rho_{2}]f(\PT{x}), \quad \rho_{1},\rho_{2}\in \RotGr[3].
\end{align*}

\noindent \textbf{Definition} A (non-linear) functional $\funct$ of
$f\in L_2(\sph{2})$ is rotationally invariant if for all rotations
$\rho\in \RotGr[3]$,
\[
	\funct(\rtop f) = \funct(f).
\]
\begin{proposition}\label{prop:rotinv}
For $f\in L_2(\sph{2})$ and $\ell\ge 0$ the sum over $m$ of the squares of
the absolute values of the Fourier coefficients $\Fcoe{f}$,
\[
	\ml(f):= \sum_{m=-\ell}^\ell |\Fcoe{f}|^2,
\]
is rotationally invariant.
\end{proposition}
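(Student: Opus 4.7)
The plan is to prove rotational invariance by observing that each eigenspace $\mathcal{H}_\ell := \mathrm{span}\{\shY : m = -\ell, \ldots, \ell\}$ of the Laplace--Beltrami operator is invariant under the rotation operator $\rtop$, and then using that $\rtop$ is a global $L_2$-isometry. Concretely, I would introduce the orthogonal projection $\projl : \Lp{2}{2} \to \mathcal{H}_\ell$, noting that by orthonormality of the spherical harmonic basis,
\begin{equation*}
  \projl f = \sum_{m=-\ell}^{\ell} \Fcoe{f}\, \shY,
  \qquad
  \norm{\projl f}{\Lp{2}{2}}^2 = \sum_{m=-\ell}^{\ell} |\Fcoe{f}|^2 = \ml(f).
\end{equation*}
Hence the claim $\ml(\rtop f) = \ml(f)$ is equivalent to $\norm{\projl(\rtop f)}{\Lp{2}{2}} = \norm{\projl f}{\Lp{2}{2}}$.

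First I would show that $\rtop$ commutes with $\projl$, i.e.\ $\projl \rtop = \rtop \projl$. The cleanest route is to verify that $\rtop$ maps $\mathcal{H}_\ell$ into itself: since $\shY$ is the restriction to $\sph{2}$ of a homogeneous harmonic polynomial of degree $\ell$ in $\R^3$, and the class of such polynomials is preserved by the linear change of variable $\PT{x} \mapsto \rho^{-1}\PT{x}$ (rotations preserve both harmonicity, via $\Delta \circ \rho^{-1} = \rho^{-1} \circ \Delta$, and homogeneity), each $\rtop \shY$ lies in $\mathcal{H}_\ell$. Equivalently, $\shY$ is an eigenfunction of the rotation-invariant Laplace--Beltrami operator $\Delta^*$ with eigenvalue $-\ell(\ell+1)$, and $\rtop$ commutes with $\Delta^*$, so $\rtop \shY$ has the same eigenvalue and therefore lies in $\mathcal{H}_\ell$. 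By orthogonality of distinct eigenspaces, $\rtop^{-1}$ also preserves $\mathcal{H}_\ell^{\perp}$, which yields $\rtop \projl = \projl \rtop$.

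Second I would use that $\rtop$ is an isometry on $\Lp{2}{2}$: because the surface measure $\sigma$ is invariant under rotations, the change of variable $\PT{y} = \rho^{-1}\PT{x}$ gives
\begin{equation*}
  \norm{\rtop g}{\Lp{2}{2}}^2
  = \int_{\sph{2}} |g(\rho^{-1}\PT{x})|^2 \IntDiff[]{x}
  = \int_{\sph{2}} |g(\PT{y})|^2 \IntDiff[]{y}
  = \norm{g}{\Lp{2}{2}}^2
\end{equation*}
for every $g \in \Lp{2}{2}$. Combining these two facts,
\begin{equation*}
  \ml(\rtop f)
  = \norm{\projl(\rtop f)}{\Lp{2}{2}}^2
  = \norm{\rtop(\projl f)}{\Lp{2}{2}}^2
  = \norm{\projl f}{\Lp{2}{2}}^2
  = \ml(f),
\end{equation*}
as required.

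The only non-trivial step is the first one: establishing that $\rtop$ preserves $\mathcal{H}_\ell$. I expect this to be the main obstacle only in the sense of deciding on the cleanest justification; the polynomial-degree argument is elementary, while the eigenspace argument is conceptually sharper but relies on the intertwining relation $\rtop \Delta^* = \Delta^* \rtop$, which itself uses rotation-invariance of the Riemannian structure on $\sph{2}$. Either route finishes the proof, and both are standard enough that the reader is not forced to invoke Wigner $D$-matrices explicitly, though the argument amounts to saying that the matrix representing $\rtop|_{\mathcal{H}_\ell}$ in the basis $\{\shY\}_{m}$ is unitary.
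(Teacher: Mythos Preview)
Your proof is correct, but it follows a genuinely different route from the paper's. The paper expresses $\ml(f)$ via Fubini and the addition theorem as the double integral
\[
\ml(f)=\frac{2\ell+1}{4\pi}\int_{\sph{2}}\int_{\sph{2}}f(\PT{x})\,\conj{f(\PT{x}')}\,P_\ell(\PT{x}\cdot\PT{x}')\,\IntDiff[]{x}\,\mathrm{d}\sigma(\PT{x}'),
\]
and then obtains $\ml(\rtop f)=\ml(f)$ by the change of variables $\PT{z}=\rho^{-1}\PT{x}$, $\PT{z}'=\rho^{-1}\PT{x}'$, using only that $\PT{x}\cdot\PT{x}'$ and $\sigma$ are rotation-invariant. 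Your argument instead identifies $\ml(f)=\norm{\projl f}{\Lp{2}{2}}^2$ and combines the two facts that $\rtop$ is a unitary on $\Lp{2}{2}$ and that it preserves the eigenspace $\mathcal{H}_\ell$, hence commutes with $\projl$. The paper's version is computationally explicit and rests on one classical identity (the addition theorem); yours is operator-theoretic, avoids the addition theorem, and makes transparent why the result holds in any setting where a unitary group action leaves a closed subspace invariant. Both are short and standard; the only point to tighten in your write-up is the sentence about $\rtop^{-1}$ preserving $\mathcal{H}_\ell^{\perp}$: what you actually use is that the unitary $\rtop$ maps $\mathcal{H}_\ell$ onto itself (finite dimension plus injectivity) and hence, by unitarity, maps $\mathcal{H}_\ell^{\perp}$ onto itself, which immediately gives $\projl\rtop=\rtop\projl$.
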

\begin{proof}
By Fubini's theorem we can write, using \eqref{eq:Fseries},
\begin{align*}
\ml(f)= \sum_{m=-\ell}^\ell |\Fcoe{f}|^2
&= \int_{\sph{2}}\int_{\sph{2}}f(\PT{x})\conj{f(\PT{x}')}
\sum_{m=-\ell}^\ell  \conj{\shY(\PT{x})}\shY(\PT{x}')
\IntDiff[]{x}\mathrm{d}\sigma(\PT{x}')\\
&=\int_{\sph{2}}\int_{\sph{2}}f(\PT{x})\conj{f(\PT{x}')}
\frac{(2\ell+1)}{4\pi}\Legen(\PT{x}\cdot\PT{x}')
\IntDiff[]{x}\mathrm{d}\sigma(\PT{x}'),
\end{align*}
where $P_\ell$ is the Legendre polynomial scaled so that $P_\ell(1)=1$,
and in the last step we used the addition theorem for spherical harmonics
\cite{Muller1966}.
Similarly, we have
\begin{align*}
 \ml(\rtop f)
&=\int_{\sph{2}}\int_{\sph{2}}\rtop f(\PT{x})\conj{\rtop f(\PT{x}')}
\frac{(2\ell+1)}{4\pi}\Legen(\PT{x}\cdot\PT{x}')
\IntDiff[]{x}\mathrm{d}\sigma(\PT{x}')\\
&=\int_{\sph{2}}\int_{\sph{2}}f(\rho^{-1}\PT{x})\conj{f(\rho^{-1}\PT{x}')}
\frac{(2\ell+1)}{4\pi}\Legen(\PT{x}\cdot\PT{x}')
\IntDiff[]{x}\mathrm{d}\sigma(\PT{x}').
\end{align*}
Now change variables to $\PT{z}:=\rho^{-1}\PT{x}$ and
$\PT{z'}:=\rho^{-1}\PT{x'}$, and use the rotational invariance of the
inner product,
\[
\PT{x}\cdot\PT{x}' = (\rho^{-1}\PT{x})\cdot(\rho^{-1}\PT{x}') =
\PT{z}\cdot\PT{z}',
\]
together with the rotational invariance of the surface measure
to obtain
\begin{align*}
 \ml(\rtop f)
=\int_{\sph{2}}\int_{\sph{2}}f(\PT{z})\conj{f(\PT{z}')}
\frac{(2\ell+1)}{4\pi}\Legen(\PT{z}\cdot\PT{z}')
\IntDiff[]{z}\mathrm{d}\sigma(\PT{z}') = \ml(f),
\end{align*}
thus completing the proof.
\end{proof}

\subsection{Random fields on spheres}
Let $(\Omega,\cF,\probm)$ be a probability space and let $\cB(\sph{2})$
denote the Borel algebra on $\sph{2}$. A real-valued random field on the
sphere $\sph{2}$ is a function $T:\Omega \times \sph{2} \rightarrow \R$
which is measurable on $\cF \otimes \cB(\sph{2})$.  Let $\Lppsph{2}{2}$ be
the $L_{2}$ space on the product space $\prodpsph[2]$ with product measure
$\prodpsphm[]$. In the paper, we assume that $\RF\in \Lppsph{2}{2}$. By Fubini's
theorem, $\RF\in \Lp{2}{2}$ $\Pas$, in which case $T$ admits an expansion
in terms of spherical harmonics, $\Pas$,
\begin{equation}\label{eq:KL}
   T  \sim \sum_{\ell=0}^\infty \sum_{m=-\ell}^{\ell}
     \coesh \shY, \qquad  \coesh := \coesh(\omega) = \int_{\sph{2}} \RF(\PT{x}) \conj{\shY(\PT{x})}
       \IntDiff[]{x}.
\end{equation}
We will for brevity write $\RF(\omega,\PT{x})$ as $\RF(\omega)$ or $\RF(\PT{x})$ if no confusion arises.

The rotational invariance of the sum of $|\coesh|^2$ over $m$ is a corollary to Proposition \ref{prop:rotinv},
which we state as follows.
\begin{corollary}\label{cor:rot inv T}
The coefficients $\coesh$ of the random field $\RF$ in \eqref{eq:KL}
have the property that for each $\ell\ge 0$
\[
\sum_{m=-\ell}^\ell |\coesh(\omega)|^2\; \mbox{is rotationally invariant}, \quad \omega\in \probSp.
\]
\end{corollary}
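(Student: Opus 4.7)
The plan is essentially to invoke Proposition~\ref{prop:rotinv} pointwise in $\omega$. The assumption $\RF\in\Lppsph{2}{2}$ together with Fubini's theorem already tells us that $\RF(\omega,\cdot)\in\Lp{2}{2}$ for $\probm$-almost every $\omega\in\probSp$. For each such $\omega$ the random coefficients $\coesh(\omega)$ defined by the integral in \eqref{eq:KL} coincide with the deterministic Fourier coefficients of the function $f:=\RF(\omega,\cdot)$ in the sense of \eqref{eq:Fseries}; this identification is immediate from comparing the two defining integrals and requires no further work.

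With that identification in hand, the proof reduces to one sentence: apply Proposition~\ref{prop:rotinv} to $f=\RF(\omega,\cdot)$ for each fixed $\omega$ in the full-measure set above, to conclude that
\[
\sum_{m=-\ell}^\ell |\coesh(\omega)|^2 \;=\; \ml\bigl(\RF(\omega,\cdot)\bigr)
\]
is rotationally invariant, which is exactly the claim. There is no genuine obstacle here, since Proposition~\ref{prop:rotinv} was stated for an arbitrary $L_2(\sph{2})$ function; the only thing one might pause over is the quantifier in the corollary, which should be read as holding $\Pas$ rather than for every $\omega$, but this is the standard convention implicit in working with $\Lppsph{2}{2}$.
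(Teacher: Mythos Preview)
Your proposal is correct and is exactly the approach the paper takes: the corollary is stated immediately after Proposition~\ref{prop:rotinv} with the remark that it follows from that proposition, with no further argument given. Your observation that the conclusion should strictly be read as holding $\probm$-almost surely rather than for every $\omega$ is a valid clarification, consistent with the paper's own earlier remark that $\RF\in\Lp{2}{2}$ only $\Pas$.
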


The coefficients $\coesh$ are assumed to be uncorrelated mean-zero complex-valued
random variables, that is
\[
\bbE[\coesh]=0,\quad \bbE[\coesh \conj{\coesh[\ell',m']}]=
C_{\ell,m} \delta_{\ell,\ell'}\delta_{m m'},
\]
where the $C_{\ell,m}$ are non-negative numbers. The sequence
$(C_{\ell,m})$ is called the \emph{angular power spectrum} of the random field $\RF$.

It follows that $\RF(\PT{x})$ has mean zero for each $\PT{x} \in \sph{2}$ and covariance
\[
\bbE[T(\PT{x})T(\PT{y})]=
\sum_{\ell=0}^\infty \sum_{m=-\ell}^{\ell}C_{\ell,m}\conj{\shY(\PT{x})}\shY(\PT{y}),
\qquad \PT{x}, \PT{y} \in \sph{2},
\]
assuming for the moment that the sum is convergent.

In this paper we are particularly concerned with questions of isotropy.
Following \cite{MaPe2011}, the random field $T$ is \emph{strongly
isotropic} if for any $k \in \bbN$ and for any set of $k$ points
$\PT{x}_1,\ldots,\PT{x}_k \in \sph{2}$ and for any rotation $\rho \in
\RotGr[3]$, $T(\PT{x}_1), \ldots, T(\PT{x}_k)$ and $T(\rho
\PT{x}_1),\ldots,T(\rho \PT{x}_k)$ have the same law, that is, have the same joint distribution in $\probSp^{k}$.

A more easily satisfied property is weak isotropy: for an integer $n \ge
1$, $T$ is said to be $n$-\emph{weakly isotropic} if for all $\PT{x} \in
\sph{2}$, the $n$th-moment of $T(\PT{x})$ is finite, i.e. $\bbE[
|T(\PT{x})|^n ] <\infty$, and if for $k=1,\ldots,n$, for all sets of $k$
points $\PT{x}_1,\ldots,\PT{x}_k \in \sph{2}$ and for any rotation $\rho
\in \RotGr[3]$,
\[
  \bbE[T(\PT{x}_1) \cdots T(\PT{x}_k) ] = \bbE [ T(\rho \PT{x}_1) \cdots T(\rho \PT{x}_k) ].
\]

If the field $T$ is at least 2-weakly isotropic and also satisfies
$\bbE[T(\PT{x})]=0$ for all $\PT{x}\in\sph{2}$ then by definition
the covariance $\bbE[T(\PT{x})T(\PT{y})]$ is rotationally
invariant, and hence admits an $L_2$-convergent expansion in terms of Legendre
polynomials,
\begin{equation*}
\bbE[T(\PT{x})T(\PT{y})] =
\sum_{\ell=0}^\infty \frac{2\ell+1}{4\pi} \APS{\ell} \Legen(\PT{x} \cdot \PT{y})
=\sum_{\ell=0}^\infty \sum_{m=-\ell}^{\ell}\APS{\ell}\conj{\shY(\PT{x})}\shY(\PT{y}),
\end{equation*}
where in the last step we again used the addition theorem for spherical
harmonics. Thus in this case we have $C_{\ell,m} = \APS{\ell}$, and the
angular power spectrum is independent of $m$,
and can be written as
\[
C_\ell = \bbE[|a_{\ell,m}|^2 ] =
 \frac{1}{2\ell+1} \bbE\left[ \sum_{m=-\ell}^\ell |a_{\ell,m}|^2\right].
\]
We note that the scaled angular power spectrum as used in astrophysics for the CMB data,
see for example \cite{Gorski_etal2005}, is
\[
  D_\ell := \frac{\ell(\ell+1)}{2\pi} \;  \APS{\ell}. 
\]

 A random field $T$ is \emph{Gaussian} if for each $k\in \bbN$ and each choice of
$\PT{x}_1,\ldots,\PT{x}_k \in \sph{2}$ the vector $(T(\PT{x}_1),\ldots,T(\PT{x}_k))$ is
a multivariate random variable with a Gaussian distribution. A Gaussian random field is completely specified by giving its mean and covariance function.

The following proposition relates Gaussian and isotropy properties of a
random field.
\begin{proposition}\label{pro:GaussF}
\cite[Proposition 5.10]{MaPe2011} Let $T$ be a Gaussian random field on
$\sph{2}$. Then $T$ is strongly isotropic if and only if $T$ is 2-weakly
isotropic.
\end{proposition}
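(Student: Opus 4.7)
The plan is to exploit the key characterization that a multivariate Gaussian distribution is completely determined by its mean vector and covariance matrix. The proposition will then follow almost immediately, with one direction being essentially a definitional unpacking and the other being a short application of this characterization.

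For the forward direction (strong isotropy $\Rightarrow$ 2-weak isotropy), I would argue as follows. If $T$ is Gaussian, then for any $\PT{x}\in\sph{2}$ the random variable $T(\PT{x})$ is (univariate) Gaussian, hence has finite moments of all orders, giving $\bbE[|T(\PT{x})|^n]<\infty$ for every $n$. Strong isotropy implies equality in law of $(T(\PT{x}_1),\ldots,T(\PT{x}_k))$ and $(T(\rho\PT{x}_1),\ldots,T(\rho\PT{x}_k))$ for every $k$ and every rotation $\rho$, so in particular all joint moments of orders $1$ and $2$ coincide, which is exactly the 2-weak isotropy condition.

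For the converse (2-weak isotropy $\Rightarrow$ strong isotropy), fix $k\in\bbN$, points $\PT{x}_1,\ldots,\PT{x}_k\in\sph{2}$, and a rotation $\rho\in\RotGr[3]$. Since $T$ is Gaussian, both random vectors $\PT{V}:=(T(\PT{x}_1),\ldots,T(\PT{x}_k))$ and $\PT{V}_\rho:=(T(\rho\PT{x}_1),\ldots,T(\rho\PT{x}_k))$ are multivariate Gaussian in $\R^k$. The law of a multivariate Gaussian is determined entirely by its mean vector and covariance matrix. By 2-weak isotropy applied with $k=1$, $\bbE[T(\PT{x}_i)]=\bbE[T(\rho\PT{x}_i)]$ for each $i$, so $\PT{V}$ and $\PT{V}_\rho$ share the same mean vector. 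By 2-weak isotropy applied with $k=2$,
\[
\bbE[T(\PT{x}_i)T(\PT{x}_j)] = \bbE[T(\rho\PT{x}_i)T(\rho\PT{x}_j)], \qquad 1\le i,j\le k,
\]
so together with the equality of means they have the same covariance matrix. Hence $\PT{V}$ and $\PT{V}_\rho$ have the same joint Gaussian distribution, which is precisely the definition of strong isotropy.

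There is no real obstacle here; the proof is short and structural. The only mild subtlety to flag is the well-definedness of finite-dimensional Gaussian vectors — namely that $T$ being a Gaussian random field (in the sense defined just before the proposition) is exactly what lets one invoke the mean-covariance characterization for every $k$-tuple of evaluation points. With that in hand, no convergence issues, spherical harmonic expansions, or properties of the angular power spectrum are needed; the argument is purely finite-dimensional and does not use the spherical structure at all beyond providing the index set.
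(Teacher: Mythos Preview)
Your argument is correct and is the standard proof of this well-known fact. Note, however, that the paper does not actually prove this proposition: it is simply quoted from \cite[Proposition~5.10]{MaPe2011} without proof, so there is no ``paper's own proof'' to compare against. Your write-up is essentially the argument one finds in Marinucci and Peccati.
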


By \cite[Theorem 5.13, p. 123]{MaPe2011}, a $2$-weakly isotropic random
field is in $\Lp{2}{2}$ $\Pas$.

In the present paper we are principally concerned with input random fields that are both Gaussian and strongly isotropic.
Our main aim is to show that the resulting regularized field is also strongly isotropic.
(Of course the Gaussianity of the field is inevitably lost, given that some of the coefficients may be replaced by zero.)

\subsection{Norms and regularization models}\label{sec:reg}
In this section the randomness of the field plays no real role.
Thus the observed field $\RFo$ may be thought of either as a deterministic
field or as one realization of a random field.
Assume that the observed field $\RFo$ is given by
\begin{equation}\label{eq:RF.o}
   \RFo = \sum_{\ell=0}^\infty \sum_{m=-\ell}^{\ell} \coesho \shY.
\end{equation}
Consider an approximating field $\RF$ with the spherical harmonic expansion
\begin{equation*}
   \RF = \sum_{\ell=0}^\infty \sum_{m=-\ell}^{\ell} \coesh \shY.
\end{equation*}
Let
$\Nz = \{0, 1, 2, \ldots\}$ and let
\begin{equation}\label{eq:Aell}
  A_\ell := \left(\sum_{m=-\ell}^\ell | \coesh |^2 \right)^\frac{1}{2}, 
  \quad \ell \in\Nz.
\end{equation}
Then
\[
\|T\|_{L_2(\mathbb{S}^2)}^2=
\sum_{\ell=0}^\infty\sum_{m=-\ell}^\ell|a_{\ell,m}|^2 = \sum_{\ell=0}^\infty A_\ell^2.
\]
Clearly $A_\ell = 0$ if and only if $\coesh = 0$ for $m =
-\ell,\ldots,\ell$.


For simplicity, we let $\vcoesh:=\vcoesh(\RF)$ (an infinite dimensional vector) denote the
sequence of spherical harmonic coefficients $\coesh, m = -\ell,\ldots,\ell$,
$\ell\in\Nz$, of the field $\RF$:
\begin{equation}\label{eq:avec}
  \vcoesh := (\coesh[0,0], \coesh[1,-1], \coesh[1,0], \coesh[1,1], \ldots,
 \coesh[\ell,-\ell], \ldots, \coesh[\ell,\ell],\ldots)^T.
\end{equation}
For a positive sequence $\{\beta_{\ell}\}_{\ell\in\Nz}$, we define the norm
\begin{equation} \label{eq:norm}
  \norm{\vcoesh}{1,2,\beta} := \norm{\vcoesh}{1,2} :=
  \sum_{\ell=0}^\infty \beta_\ell \left( \sum_{m=-\ell}^\ell |\coesh|^2 \right)^\frac{1}{2} =
  \sum_{\ell=0}^\infty \beta_\ell A_\ell.
\end{equation}
We call $\beta_{\ell}$ the \emph{degree-scaling} sequence, because it describes the
relative importance of different degrees $\ell$.
(In Section~\ref{sec:beta}, we will discuss the choice
of the parameters $\beta_\ell$.) This choice of norm, a
scaled hybrid between the standard $\ell_1$ and $\ell_{2}$ norms, is the key to
preserving isotropy while still giving sparse solutions.

We will measure the agreement between the observed data $\coesho$ and the
approximation $a_{\ell,m}$ by the $\ell_{2}$ norm, or its square, the discrepancy,
\begin{equation*}
  \norm{\vcoesh - \vcoesho}{2}^2 =
  \sum_{\ell=0}^\infty \sum_{m=-\ell}^\ell | \coesh - \coesho |^2 = \|\RF-\RFo\|^2_{\Lp{2}{2}}.
\end{equation*}

Given the observed data $\coesho$ for $\ell \in\Nz$, $m
= -\ell,\ldots,\ell$ arranged in the vector $\vcoesho$ as in
(\ref{eq:avec}), and a regularization parameter $\lambda \geq 0$, our
regularized problem is\
\begin{equation}\label{eq:model1}
  \mathop{\mathrm{Minimize}}_{\vcoesh} \
  \tfrac{1}{2}\norm{\vcoesh - \vcoesho}{2}^2 + \lambda \norm{\vcoesh}{1,2}.
\end{equation}
As both norms are convex functions and $\lambda \geq 0$, the objective is strictly convex
and there is a unique global minimizer.
Moreover, first order optimality conditions are both necessary and sufficient for
a global minimizer (see \cite{Ber2009} for example).

A closely related model is
\begin{equation}\label{eq:model2}
\begin{array}{cl}
  \displaystyle\mathop{\mathrm{Minimize}}_{\vcoesh} &
  \norm{\vcoesh}{1,2} \\[1ex]
  \mbox{Subject to} & \norm{\vcoesh - \vcoesho}{2}^2 \leq \sigma^2.
\end{array}
\end{equation}
Again, as the feasible region is bounded a global solution exists, and as
the norms are convex functions any local minimizer is a global minimizer
and the necessary conditions for a local minimizer are also sufficient.
When the constraint in (\ref{eq:model2}) is active, the Lagrange
multiplier determines the value of $\lambda$ in (\ref{eq:model1}). If the
objective was $\| \vcoesh \|_1$, instead of $\| \vcoesh \|_{1,2}$, this
would be a very simple example of the constrained $\ell_{1}$-norm minimization
problem, widely used, see \cite{Don2006,CanRT2006a,vdBerFri2011} for
example, to find sparse solutions to under-determined systems of linear
equations. Such problems, with a separable structure, can be readily
solved, see \cite{WriNF2009,spgl1:2007} for example.

An alternative formulation would be a LASSO~\cite{Tib1996,OsbPT2000a} based approach:
\begin{equation}\label{eq:model3}
\begin{array}{cl}
  \displaystyle\mathop{\mathrm{Minimize}}_{\vcoesh} &
  \tfrac{1}{2} \norm{\vcoesh - \vcoesho}{2}^2 \\[1ex]
  \mbox{Subject to} & \norm{\vcoesh}{1,2} \leq \kappa.
  \end{array}
\end{equation}
Such problems, using the standard $\ell_{1}$ norm $\| \vcoesh\|_1$ instead of $\| \vcoesh\|_{1,2}$,
and related problems have been widely explored in statistics and compressed sensing,
see \cite{EfHaJoTi2004,CanRT2006a,Don2006} for example.

The \emph{regularized random field} $\RFr$ is given in terms of the
spherical harmonic expansion
\begin{equation}\label{eq:RF.r}
  \RFr := \sum_{\ell=0}^\infty \sum_{m=-\ell}^{\ell} \coeshr \shY,
\end{equation}
where the regularized coefficients $\coeshr$ minimize one of the model problems
(\ref{eq:model1}), (\ref{eq:model2}) or (\ref{eq:model3}).

We will concentrate on the model (\ref{eq:model1}). The relation to the
other models is detailed in the appendix. It is up to the user to choose
which regularization model is easiest to interpret: in a particular
application specifying a bound $\norm{\vcoesh - \vcoesho}{2}^2
\leq \sigma^2$ on the discrepancy or a bound $\norm{\vcoesh}{1,2} \leq
\kappa$ on the norm of the regularized solution may be easier to interpret than directly specifying the regularization parameter $\lambda$. The
appendix shows how to determine the corresponding value of the
regularization parameter $\lambda$ given either $\sigma$ or $\kappa$ for
these alternative models.

\section{Analytic solution to the sparse regularization model}\label{sec:regsol}
Consider the optimization problem (\ref{eq:model1}). The coefficients
$\coesh$ and $\coesho$ are complex, while all the other quantities, such
as $A_\ell, \Alo$, $\beta_\ell$ and $\alr$, are real.  Temporarily we
write the real and imaginary parts of $a_{\ell,m}$ explicitly,
\[
a_{\ell,m}=x_{\ell,m}+ \imu\hspace{0.3mm} y_{\ell,m},\quad \mbox{and define} \quad
\nabla_{a_{\ell,m}}
=\frac{\partial}{\partial x_{\ell,m}}+ \imu\hspace{0.3mm} \frac{\partial}{\partial y_{\ell,m}}.
\]
Then for all degrees $\ell$ for which $A_\ell$ is positive the
definition~(\ref{eq:Aell}) gives
\begin{equation}\label{eq:normderiv}
  \nabla_{\coesh} A_\ell =  A_\ell^{-1} \; \coesh, \qquad
  m = -\ell,\ldots,\ell,
\end{equation}
and hence from \eqref{eq:norm}
\begin{equation*}
  \nabla_{\coesh}\norm{\vcoesh}{1,2} = \beta_\ell A_\ell^{-1} \; \coesh, \qquad
  m = -\ell,\ldots,\ell,
\end{equation*}
It follows that the necessary and sufficient conditions for a local/global
minimum in \eqref{eq:model1} are
\begin{eqnarray}
 (\coesh - \coesho) + \lambda \beta_\ell A_\ell^{-1} \coesh  = 0, & & \quad
     m = -\ell,\ldots,\ell \qquad \mbox{when } A_\ell > 0, \label{eq:opt1a}\\
 \coesh = 0, & & \quad m = -\ell,\ldots,\ell \qquad \mbox{when } A_\ell = 0. \notag 
\end{eqnarray}
For each $\lambda\ge 0$ we define the degree sets
\begin{equation}\label{eq:Gamma}
\Gamma(\lambda) := \left\{\ell\in\Nz: \frac{\Alo}{\beta_\ell} > \lambda\right\},
\qquad
\Gamma^c(\lambda) := \left\{\ell\in\Nz: \frac{\Alo}{\beta_\ell} \leq \lambda\right\}.
\end{equation}
Note that both $\Gamma(\lambda)$ and $\Gamma^c(\lambda)$ are random sets.  For a particular realization of the field and for
$\lambda = 0$ the set $\Gamma^c(0)$ consists only of those
$\ell$ values for which $\Alo = 0$. %
For $\lambda \geq \sup_{\ell\in\Nz}
\{\Alo/\beta_\ell\}$
the set $\Gamma(\lambda)$ is empty. %
For degrees $\ell\in\Gamma^c(\lambda)$, all the regularized coefficients
are zero. For degrees $\ell \in \Gamma(\lambda)$ where $A_\ell > 0$,
equation (\ref{eq:opt1a}) shows that the regularized coefficients are given by
\begin{equation}\label{eq:Aellr}
 \coeshr = \alr \coesho \quad \mbox{and hence}\quad
\Alr := \left(\sum_{m=-\ell}^\ell|\coeshr|^2\right)^{1/2}  = \alr \Alo,
\end{equation}
where $\Alo$ is given by \eqref{eq:Alo},
and (\ref{eq:opt1a}) gives
\begin{equation}\label{eq:alpha}
  \alr = \frac{\Alr}{\Alr + \lambda \beta_\ell} = \frac{\Alr}{\Alo} =
  \frac{\Alo - \lambda \beta_\ell}{\Alo} \in (0, 1].
\end{equation}

Summing up, $\Alr$ is given by
\begin{equation*} \label{eq:Alr}
\Alr := \left\{ \begin{array}{cl} \displaystyle \Alo -
\lambda \beta_\ell & \quad\mbox{for }
  \ell \in \Gamma(\lambda),\\
  \displaystyle 0 & \quad\mbox{for } \ell \in \Gamma^c(\lambda),
  \end{array}\right.
\end{equation*}
and the regularized coefficients are
\begin{equation*}
  \coeshr = \left\{\begin{array}{cl}
\alr \coesho & \mbox{for } m = -\ell,\ldots,\ell, \quad \ell \in
\Gamma(\lambda),\\
0 & \mbox{for } m = -\ell,\ldots,\ell, \quad \ell\in\Gamma^c(\lambda).
  \end{array}\right.
\end{equation*}
In the vector notation introduced in \eqref{eq:avec},
\begin{eqnarray}
  & & \norm{\vcoesh^{\mathrm{r}}}{1,2} = 
      \sum_{\ell\in\Gamma(\lambda)} \beta_\ell (\Alo - \lambda\beta_\ell) =\sum_{\ell\in\Gamma(\lambda)}\beta_\ell A_\ell^r, \label{eq:lamnrm}\\[1ex]
  & & \norm{\vcoesh^{\mathrm{r}} - \vcoesho}{2}^2 =
      \lambda^2\sum_{\ell\in\Gamma(\lambda)}\beta_\ell^2
      +\sum_{\ell\in\Gamma^c(\lambda)} (\Alo)^2. \label{eq:lamssq}
\end{eqnarray}
The value $\lambda = 0$ gives the solution $\vcoesh^{\mathrm{r}} =
\vcoesho$ (noting that in \eqref{eq:lamssq} the first term vanishes for
$\lambda=0$ while in the second sum each term is zero).  For $\lambda
> \sup_{\ell\in \Nz} \{\Alo/\beta_\ell\}$ the solution is
$\vcoesh^{\mathrm{r}} = 0$.
We have established the solution to problem \eqref{eq:model1}, as summarized by the following proposition.
\begin{proposition}\label{prop:reg.sol}
Let $\coesho$, $m=-\ell,\dots,\ell$, $\ell\in\Nz$, be the Fourier coefficients for a random field $\RFo$ on $\sph{2}$. For a positive sequence $\{\beta_{\ell}\}_{\ell\in\Nz}$ and a positive regularization parameter $\lambda$, the solution of the regularization problem \eqref{eq:model1} is, in the $\Lppsph{2}{2}$ sense,
\begin{equation*}
  \RFr = \sum_{\ell=0}^\infty \sum_{m=-\ell}^{\ell} \coeshr \shY
\end{equation*}
with regularized coefficients, for $m=-\ell,\dots,\ell$ and $\ell\in\Nz$,
\begin{equation*}
   \coeshr := \left\{\begin{array}{ll}
	\displaystyle \left(1 - \frac{\lambda\beta_{\ell}}{\Alo}\right) \coesho, & \Alo>\lambda\beta_{\ell},\\
	0, & \Alo\le\lambda\beta_{\ell},
 \end{array}\right.
\quad
\Alr = \left\{\begin{array}{ll}
        \displaystyle \left(1 - \frac{\lambda\beta_{\ell}}{\Alo}\right) \Alo, & \Alo>\lambda\beta_{\ell},\\
        0, & \Alo\le\lambda\beta_{\ell},
 \end{array}\right.
\end{equation*}
where $\Alo$ is given by \eqref{eq:Alo}, and $A_\ell^r$ by \eqref{eq:Aellr}. 
\end{proposition}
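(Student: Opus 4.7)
The plan is to show that the claimed $\coeshr$ is the unique minimizer of the strictly convex objective in \eqref{eq:model1}, by exploiting separability across degrees $\ell$ and then checking first-order (respectively subdifferential) optimality within each degree. Since both the squared discrepancy $\norm{\vcoesh-\vcoesho}{2}^{2}=\sum_{\ell}\sum_{m}|\coesh-\coesho|^{2}$ and the regularizer $\norm{\vcoesh}{1,2}=\sum_{\ell}\beta_{\ell}A_{\ell}$ split as sums of terms each depending only on the group $\mathbf{a}_{\ell}:=(\coesh)_{m=-\ell}^{\ell}\in\mathbb{C}^{2\ell+1}$, the problem decouples into independent finite-dimensional problems
\[
\mathop{\mathrm{Minimize}}_{\mathbf{a}_{\ell}\in\mathbb{C}^{2\ell+1}}\;
g_{\ell}(\mathbf{a}_{\ell}):=\tfrac{1}{2}\|\mathbf{a}_{\ell}-\mathbf{a}_{\ell}^{\mathrm{o}}\|_{2}^{2}
+\lambda\beta_{\ell}\|\mathbf{a}_{\ell}\|_{2},
\]
one for each $\ell\in\Nz$, where $\|\mathbf{a}_{\ell}\|_{2}=A_{\ell}$ and $\|\mathbf{a}_{\ell}^{\mathrm{o}}\|_{2}=\Alo$.

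Next I would handle the two cases separately. When $\Alo>\lambda\beta_{\ell}$, I would look for a nonzero critical point: on the open set $\{\mathbf{a}_{\ell}\neq 0\}$ the regularizer is smooth and (\ref{eq:normderiv}) together with (\ref{eq:opt1a}) yields $(1+\lambda\beta_{\ell}/A_{\ell})\coesh=\coesho$ for each $m$. Taking the $\ell_{2}$-norm in $m$ of this identity gives $A_{\ell}+\lambda\beta_{\ell}=\Alo$, hence $A_{\ell}=\Alo-\lambda\beta_{\ell}>0$ (so the ansatz $\mathbf{a}_{\ell}\neq 0$ is self-consistent), and substituting back produces $\coeshr=(1-\lambda\beta_{\ell}/\Alo)\coesho$ as asserted. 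When $\Alo\le\lambda\beta_{\ell}$, the candidate is $\mathbf{a}_{\ell}=0$, where $g_{\ell}$ is not differentiable; I would verify optimality via the subdifferential, using that $\partial\|\cdot\|_{2}(0)=\{\mathbf{v}\in\mathbb{C}^{2\ell+1}:\|\mathbf{v}\|_{2}\le 1\}$. The condition $0\in-\mathbf{a}_{\ell}^{\mathrm{o}}+\lambda\beta_{\ell}\,\partial\|\cdot\|_{2}(0)$ reduces exactly to $\Alo\le\lambda\beta_{\ell}$, confirming the choice. Strict convexity of $g_{\ell}$ then makes the minimizer unique in each case.

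Finally I would assemble the coefficients into the field $\RFr$ and check that the resulting series lies in $\Lppsph{2}{2}$: by construction $\Alr\le\Alo$ for every $\ell$, so Parseval's identity gives
\[
\bbE\,\|\RFr\|_{\Lp{2}{2}}^{2}
=\bbE\sum_{\ell=0}^{\infty}(\Alr)^{2}
\le \bbE\sum_{\ell=0}^{\infty}(\Alo)^{2}
=\bbE\,\|\RFo\|_{\Lp{2}{2}}^{2}<\infty,
\]
which justifies convergence of the spherical harmonic series in the $\Lppsph{2}{2}$ sense. The main obstacle is the non-differentiability of the regularizer on the set where some $\mathbf{a}_{\ell}$ vanishes; the clean way to bypass it is the subdifferential argument above, which simultaneously identifies the thresholding rule and certifies sparsity. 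Everything else is bookkeeping that is already set up by equations \eqref{eq:opt1a}–\eqref{eq:alpha} in the preceding text.
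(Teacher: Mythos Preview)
Your proposal is correct and follows essentially the same route as the paper: decouple by degree, write the first-order optimality condition \eqref{eq:opt1a} on the smooth region $A_{\ell}>0$, take norms to obtain \eqref{eq:alpha}, and read off the soft-thresholded coefficients. Your explicit subdifferential check at $\mathbf{a}_{\ell}=0$ and the Parseval bound for $\Lppsph{2}{2}$ membership add rigor that the paper either leaves implicit or defers to the lemma in Section~\ref{sec:err}, but the underlying argument is the same.
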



\section{Regularization preserves strong isotropy}\label{sec:iso}
Marinucci and Peccati \cite[Lemma 6.3]{MaPe2011} proved that the Fourier
coefficients of a strongly isotropic random field have the same law under any rotation of the coordinate axes, in a sense to be made precise in the first part of the following theorem. In the following theorem we prove that the converse is also
true.

In the theorem, $D^\ell(\rho)$, for a given $\ell\ge 0$ and a given
rotation $\rho\in \RotGr[3]$, is the $(2\ell+1)\times (2\ell+1)$  Wigner
matrix, which has the property
\[
\sum_{m'=-\ell}^\ell D^\ell(\rho)_{m',m} \shY[\ell,m'](\PT{x})
=\shY( \rho^{-1} \PT{x}),\quad m=-\ell, \ldots, \ell.
\]
The Wigner matrices form (irreducible) $(2\ell+1)$-dimensional
representations of the
 rotation group $\RotGr[3]$, in the sense that (as can easily be verified)
\[
D^\ell(\rho_1)D^\ell(\rho_2)=D^\ell(\rho_1\rho_2), \quad \rho_1,\rho_2 \in \RotGr[3].
\]
\begin{theorem}
\label{lem:aell isotropy} Let $\RF$ be a real, square-integrable random
field on $\sph{2}$, with the spherical harmonic coefficients $\coesh$. Let
$\vcoesh_{\ell \cdot}$  denote the corresponding
$(2\ell+1)$-dimensional vector,
$\vcoesh_{\ell \cdot}:=(\coesh[\ell ,-\ell],\ldots,\coesh[\ell
,\ell])^T$.\\
(i) \cite[Lemma 6.3]{MaPe2011} If $T$ is strongly isotropic then for every
rotation $\rho\in\RotGr[3]$, every $k\ge 1$ and every
$\ell_1,\ldots,\ell_k\ge 0$, we have
\begin{equation}\label{cond aell}
   (D^{\ell_1}(\rho)  \vcoesh_{\ell_1 \cdot}, \ldots, D^{\ell_k}(\rho)  \vcoesh_{\ell_k \cdot})
   \overset{d}{=} (\vcoesh_{\ell_1 \cdot},\ldots,\vcoesh_{\ell_k \cdot}),
\end{equation}
where $\overset{d}{=}$ denotes identity in distribution.\\
(ii) If the condition \eqref{cond aell} holds for all $\rho\in \RotGr[3]$, all $k\ge1$ and any $\ell_1,\dots,\ell_{k}\ge0$, then the field $\RF$ is
strongly isotropic.
\end{theorem}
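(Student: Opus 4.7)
The strategy is to show that for any $k\ge 1$, any $\PT{x}_1,\ldots,\PT{x}_k\in\sph{2}$, and any $\rho\in\RotGr[3]$, the random vectors $(\RF(\rho\PT{x}_1),\ldots,\RF(\rho\PT{x}_k))$ and $(\RF(\PT{x}_1),\ldots,\RF(\PT{x}_k))$ share the same joint law, by transferring the rotation from the spatial argument of $\RF$ onto the coefficient blocks $\vcoesh_{\ell\cdot}$ through the Wigner matrices, and then quoting the distributional hypothesis \eqref{cond aell}.

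Replacing $\rho$ by $\rho^{-1}$ in the Wigner relation stated just before the theorem gives $\shY(\rho\PT{x})=\sum_{m'=-\ell}^{\ell}D^{\ell}(\rho^{-1})_{m',m}\shY[\ell,m'](\PT{x})$, so substituting into the expansion \eqref{eq:KL} and exchanging the finite sum over $m$ yields, in the $L_2(\Omega\times\sph{2})$ sense,
\[
\RF(\rho\PT{x}) \;=\; \sum_{\ell=0}^{\infty}\sum_{m'=-\ell}^{\ell} \bigl(D^{\ell}(\rho^{-1})\vcoesh_{\ell\cdot}\bigr)_{m'}\,\shY[\ell,m'](\PT{x}).
\]
For a truncation level $L\ge 0$ and the fixed points $\PT{x}_1,\ldots,\PT{x}_k$, let $\Phi_L$ be the deterministic $\mathbb{C}$-linear map sending a tuple $(\PT{b}_0,\ldots,\PT{b}_L)$ with $\PT{b}_\ell\in\mathbb{C}^{2\ell+1}$ to the $k$-vector with $j$th entry $\sum_{\ell=0}^{L}\sum_{m=-\ell}^{\ell}(\PT{b}_\ell)_m\shY(\PT{x}_j)$, and let $S_L\RF$ denote the degree-$L$ partial sum in \eqref{eq:KL}. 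The identity above says
\[
\bigl(S_L\RF(\rho\PT{x}_1),\ldots,S_L\RF(\rho\PT{x}_k)\bigr) \;=\; \Phi_L\bigl(D^{0}(\rho^{-1})\vcoesh_{0\cdot},\ldots,D^{L}(\rho^{-1})\vcoesh_{L\cdot}\bigr),
\]
while $(S_L\RF(\PT{x}_1),\ldots,S_L\RF(\PT{x}_k)) = \Phi_L(\vcoesh_{0\cdot},\ldots,\vcoesh_{L\cdot})$.

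Applying \eqref{cond aell} with $\rho$ replaced by $\rho^{-1}$ and with $\ell_j=j-1$ for $j=1,\ldots,L+1$ gives equality in distribution of the two tuples fed to $\Phi_L$, so pushing forward under $\Phi_L$ yields
\[
\bigl(S_L\RF(\rho\PT{x}_1),\ldots,S_L\RF(\rho\PT{x}_k)\bigr) \;\overset{d}{=}\; \bigl(S_L\RF(\PT{x}_1),\ldots,S_L\RF(\PT{x}_k)\bigr)
\]
for every $L\ge 0$. Letting $L\to\infty$ and using that $S_L\RF\to\RF$ in $L_2(\Omega\times\sph{2})$ (and hence, for a suitable version of $\RF$, at the chosen points $\PT{x}_j$) promotes this to equality of joint laws for $(\RF(\rho\PT{x}_1),\ldots,\RF(\rho\PT{x}_k))$ and $(\RF(\PT{x}_1),\ldots,\RF(\PT{x}_k))$, which is strong isotropy. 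The main obstacle is precisely this limit step: since pointwise values $\RF(\PT{x}_j)$ are defined only up to a choice of version, the rigorous passage from truncated to full expansions requires either a version of $\RF$ at which the partial sums converge appropriately, or a reformulation via the spectral representation used in \cite{MaPe2011}.
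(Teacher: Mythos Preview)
Your argument is essentially the same as the paper's: transfer the rotation from the spatial variable to the coefficient vectors via the Wigner matrices, then invoke the distributional hypothesis and push forward under the map taking coefficients to $(T(\cdot),\ldots,T(\cdot))$. The only difference is cosmetic: the paper works directly with the full series (writing $T(\rho^{-1}\PT{x}_j)=\sum_{\ell=0}^\infty\sum_{m'}\widetilde{a_{\ell,m'}}Y_{\ell,m'}(\PT{x}_j)$ and applying the ``$f(B)\overset{d}{=}f(B')$'' principle to the infinite collection of coefficients at once), whereas you truncate at degree $L$ and pass to the limit. Your caveat about the limit step is fair, but note that the paper's direct approach has the same soft spot---it implicitly uses that equality of all finite-dimensional marginals of the coefficient sequence lifts (via Kolmogorov consistency) to equality of the full sequence laws, after which the series defining $T(\PT{x}_j)$ is a measurable functional of that sequence.
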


\begin{proof}[Proof of (ii)]
Let $\rho$ be a rotation in $\RotGr[3]$ and let $\PT{x}_1,\ldots,\PT{x}_k$ be
$k$ arbitrary points on $\sph{2}$. Then
\begin{align*}
 & \left( \RF(\rho^{-1}\PT{x}_1),\ldots,\RF(\rho^{-1}\PT{x}_k) \right)\\
 & = \left(
\sum_{\ell_1=0}^\infty \sum_{m_1=-\ell_1}^{\ell_1} a_{\ell_1,m_1} Y_{\ell_1,m_1}( \rho^{-1} \PT{x}_1),\ldots,
\sum_{\ell_k=0}^\infty \sum_{m_k=-\ell_k}^{\ell_k} a_{\ell_k,m_k} Y_{\ell_k,m_k}( \rho^{-1} \PT{x}_k)
 \right) \\
& =  \left(
\sum_{\ell_1=0}^\infty \sum_{m_1=-\ell_1}^{\ell_1} a_{\ell_1,m_1} \sum_{m_1'=-\ell_1}^{\ell_1} D^{\ell_1}(\rho)_{m_1',m_1} \shY[\ell_1,m_1'](\PT{x}_1),\ldots, \right.\\
&\left.\qquad\qquad\sum_{\ell_k=0}^\infty \sum_{m_k=-\ell_k}^{\ell_k} a_{\ell_k,m_k} \sum_{m_k'=-\ell_k}^{\ell_k} D^{\ell_k}(\rho)_{m_k',m_k} \shY[\ell_k,m_k'](\PT{x}_k)
\right) \\
& =  \left(
\sum_{\ell_1=0}^\infty \sum_{m_1'=-\ell_1}^{\ell_1} \wtd{\coesh[\ell_1,m_1']}  \shY[\ell_1,m_1'](\PT{x}_1),\ldots,
\sum_{\ell_k=0}^\infty \sum_{m_k'=-\ell_k}^{\ell_k} \wtd{\coesh[\ell_k,m_k']}  \shY[\ell_k,m_k'](\PT{x}_k)
\right)
\end{align*}
where we write
\[
\wtd{ \coesh[\ell,m']  } := \sum_{m=-\ell}^\ell D^{\ell}(\rho)_{m',m}\: \coesh.
\]
Since condition \eqref{cond aell} holds, for all $\ell_1,\ldots,\ell_k\ge
0$ we have
\[
(\wtd{\coesh[\ell_1,-\ell_1]},\ldots, \wtd{\coesh[\ell_1,\ell_1]}, \ldots,
\wtd{\coesh[\ell_k,-\ell_k]},\ldots,\wtd{\coesh[\ell_k,\ell_k]}) \overset{d}{=}
(\coesh[\ell_1,-\ell_1],\ldots, \coesh[\ell_1,\ell_1]
\ldots, \coesh[\ell_k,-\ell_k],\ldots, \coesh[\ell_k,\ell_k]).
\]
Now we use a simple
 instance of the
 principle that if a finite set $B$ of random variables has the same joint
 distribution as another set $B'$, then, for any measurable real-valued function $f$, $f(B)$
 will have the same joint distribution as $f(B')$.
Thus,
\begin{align*}
  \left( \RF(\rho^{-1}\PT{x}_1),\ldots,\RF(\rho^{-1}\PT{x}_k) \right)
& \overset{d}{=}
\left(
\sum_{\ell_1=0}^\infty \sum_{m_1=-\ell_1}^{\ell_1} \coesh[\ell_1,m_1] \shY[\ell_1,m_1](\PT{x}_1),\ldots,
\sum_{\ell_k=0}^\infty \sum_{m_k=-\ell_k}^{\ell_k} \coesh[\ell_k,m_k] \shY[\ell_k,m_k](\PT{x}_k)
 \right) \\
& = \left( \RF(\PT{x}_1),\ldots,\RF(\PT{x}_k) \right).
\end{align*}
In other words, the random field $T$ is strongly isotropic.
\end{proof}

The following theorem shows that the regularized random field $\RFr$ in
\eqref{eq:RF.r} is strongly isotropic if the observed random field $\RFo$
is strongly isotropic.

\begin{theorem}\label{thm:isotropy.reg.RF} Let $\RFo$ be a real observed random field on the sphere $\sph{2}$ as in \eqref{eq:RF.o} and let $\RFr$ given by 
Proposition~\ref{prop:reg.sol} be the correspondingly regularized random field. If $\RFo$ is strongly isotropic then the regularized random field $\RFr$ is also strongly isotropic.
\end{theorem}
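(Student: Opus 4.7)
The plan is to apply Theorem~\ref{lem:aell isotropy}(ii) to the regularized field. That is, given strong isotropy of $\RFo$, I will verify that for every rotation $\rho\in\RotGr[3]$, every $k\ge 1$, and every $\ell_1,\dots,\ell_k\ge 0$,
\begin{equation*}
\bigl(D^{\ell_1}(\rho)\vcoeshr_{\ell_1\cdot},\dots,D^{\ell_k}(\rho)\vcoeshr_{\ell_k\cdot}\bigr)
\overset{d}{=}
\bigl(\vcoeshr_{\ell_1\cdot},\dots,\vcoeshr_{\ell_k\cdot}\bigr),
\end{equation*}
so that the regularized field satisfies the hypothesis of part~(ii), and therefore is strongly isotropic.

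The key ingredient is the fact that each Wigner matrix $D^\ell(\rho)$ is unitary on $\mathbb{C}^{2\ell+1}$, so that $\|D^\ell(\rho)\mathbf{v}\|_2=\|\mathbf{v}\|_2$; this is (the deterministic version of) Proposition~\ref{prop:rotinv} and Corollary~\ref{cor:rot inv T} applied to $\vcoesho_{\ell\cdot}$. Consequently, if I define the block-wise soft-thresholding map
\begin{equation*}
f_\ell(\mathbf{v}) := \left(1-\frac{\lambda\beta_\ell}{\|\mathbf{v}\|_2}\right)_+ \mathbf{v}, \qquad \mathbf{v}\in\mathbb{C}^{2\ell+1},
\end{equation*}
(with $f_\ell(\mathbf{0})=\mathbf{0}$), then $f_\ell$ is Borel measurable and satisfies the equivariance relation
\begin{equation*}
f_\ell\bigl(D^\ell(\rho)\mathbf{v}\bigr) = D^\ell(\rho) f_\ell(\mathbf{v}),
\end{equation*}
precisely because $\|D^\ell(\rho)\mathbf{v}\|_2=\|\mathbf{v}\|_2$. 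By Proposition~\ref{prop:reg.sol}, the regularized block is exactly $\vcoeshr_{\ell\cdot}=f_\ell(\vcoesho_{\ell\cdot})$.

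With these observations in place, the proof proceeds in three short steps. First, since $\RFo$ is strongly isotropic, Theorem~\ref{lem:aell isotropy}(i) applied to $\RFo$ yields
\begin{equation*}
\bigl(D^{\ell_1}(\rho)\vcoesho_{\ell_1\cdot},\dots,D^{\ell_k}(\rho)\vcoesho_{\ell_k\cdot}\bigr)
\overset{d}{=}
\bigl(\vcoesho_{\ell_1\cdot},\dots,\vcoesho_{\ell_k\cdot}\bigr).
\end{equation*}
Second, applying the measurable map $(\mathbf{v}_1,\dots,\mathbf{v}_k)\mapsto(f_{\ell_1}(\mathbf{v}_1),\dots,f_{\ell_k}(\mathbf{v}_k))$ to both sides preserves equality in distribution, giving
\begin{equation*}
\bigl(f_{\ell_1}(D^{\ell_1}(\rho)\vcoesho_{\ell_1\cdot}),\dots,f_{\ell_k}(D^{\ell_k}(\rho)\vcoesho_{\ell_k\cdot})\bigr)
\overset{d}{=}
\bigl(f_{\ell_1}(\vcoesho_{\ell_1\cdot}),\dots,f_{\ell_k}(\vcoesho_{\ell_k\cdot})\bigr)
=\bigl(\vcoeshr_{\ell_1\cdot},\dots,\vcoeshr_{\ell_k\cdot}\bigr).
\end{equation*}
Third, the equivariance $f_\ell\circ D^\ell(\rho)=D^\ell(\rho)\circ f_\ell$ rewrites the left-hand side as $(D^{\ell_1}(\rho)\vcoeshr_{\ell_1\cdot},\dots,D^{\ell_k}(\rho)\vcoeshr_{\ell_k\cdot})$, yielding precisely the condition \eqref{cond aell} for $\RFr$. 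Theorem~\ref{lem:aell isotropy}(ii) then concludes that $\RFr$ is strongly isotropic.

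The only subtle point, and the step I would double check carefully, is the equivariance $f_\ell\circ D^\ell(\rho)=D^\ell(\rho)\circ f_\ell$: it hinges on $D^\ell(\rho)$ being an isometry on the $\ell_2$-norm of each degree-$\ell$ block (so that the shrinkage factor $1-\lambda\beta_\ell/\|\mathbf{v}\|_2$ is itself invariant), and this is exactly the content of Corollary~\ref{cor:rot inv T}. Everything else is a clean application of the fact that measurable functions preserve equality in distribution.
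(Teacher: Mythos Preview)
Your proof is correct and follows essentially the same route as the paper's: both apply Theorem~\ref{lem:aell isotropy}(i) to $\RFo$, push the distributional identity through the degree-wise shrinkage map (using that the shrinkage factor depends only on the rotationally invariant quantity $\|\vcoesho_{\ell\cdot}\|_2$, i.e.\ Corollary~\ref{cor:rot inv T}), and conclude via Theorem~\ref{lem:aell isotropy}(ii). Your formulation in terms of the explicit equivariance $f_\ell\circ D^\ell(\rho)=D^\ell(\rho)\circ f_\ell$ makes the key commutation step slightly more transparent than the paper's phrasing, but the argument is the same.
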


\begin{proof}
For an arbitrary realization of the regularized field we have
\begin{align}\label{eq:RF.r.a}
  \RFr(\PT{x}) & = \sum_{\ell=0}^\infty \sum_{m=-\ell}^{\ell} \coeshr \shY(\PT{x})\notag\\[1mm]
  & = \sum_{\ell=0}^\infty \sum_{m=-\ell}^{\ell} \alr(\RFo)\coesho \shY(\PT{x})\notag,\quad \PT{x}\in\sph{2},\quad  
\end{align}
where the $\alpha_\ell(\RFo)$, for $\ell=0,1,2,\ldots$ given by \eqref{eq:alpha}, are rotationally invariant
as a consequence of Corollary~\ref{cor:rot inv T}.

Since $\RFo$ is strongly isotropic, from Theorem~\ref{lem:aell isotropy} part (i),
for any rotation $\rho \in \RotGr[3]$, every $k \ge 1$ and every $\ell_1,\ldots,\ell_k \ge 0$, we have
\[
   (D^{\ell_1}(\rho)  \vcoesho_{\ell_1 \cdot}, \ldots, D^{\ell_k}(\rho)  \vcoesho_{\ell_k \cdot})
   \overset{d}{=} (\vcoesho_{\ell_1 \cdot},\ldots,\vcoesho_{\ell_k \cdot}).
\]
It follows from the rotational invariance of the $\alpha_{\ell}$ that
\begin{equation}\label{eq:D.a}
 (\alpha_{\ell_1} D^{\ell_1}(\rho) \vcoesho_{\ell_1 \cdot},\ldots,
 \alpha_{\ell_k} D^{\ell_k}(\rho) \vcoesho_{\ell_k \cdot})
\overset{d}{=}(\alpha_{\ell_1} \vcoesho_{\ell_1 \cdot}, \ldots,
\alpha_{\ell_k} \vcoesho_{\ell_k \cdot})
\end{equation}
The equality in \eqref{eq:D.a} is equivalent to
\[
  (D^{\ell_1}(\rho) \vcoeshr_{\ell_1 \cdot},\ldots,
   D^{\ell_k}(\rho) \vcoeshr_{\ell_k \cdot}) \overset{d}{=}
(\vcoeshr_{\ell_1 \cdot},\ldots, \vcoeshr_{\ell_k \cdot}),
\]
for any rotation $\rho$, every $k \ge 1$ and every $\ell_1,\ldots,\ell_k \ge 0$.
So, by Theorem~\ref{lem:aell isotropy} part (ii) the field $\RFr$ is strongly isotropic.
\end{proof}

The above theorem and Proposition~\ref{pro:GaussF} imply the following corollary.
\begin{corollary}
The regularized random field $\RFr$ is strongly isotropic if the
observed random field $\RFo$ is Gaussian and 2-weakly isotropic.
\end{corollary}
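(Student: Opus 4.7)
The plan is to chain together the two results immediately preceding the corollary: Proposition~\ref{pro:GaussF}, which characterizes strong isotropy for Gaussian fields via 2-weak isotropy, and Theorem~\ref{thm:isotropy.reg.RF}, which propagates strong isotropy from the observed field to the regularized field.

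Concretely, I would proceed as follows. First, I would apply Proposition~\ref{pro:GaussF} to the hypothesis: since $\RFo$ is Gaussian and 2-weakly isotropic, the ``if and only if'' in that proposition upgrades 2-weak isotropy to strong isotropy for $\RFo$. Second, I would plug $\RFo$ (now known to be strongly isotropic) into Theorem~\ref{thm:isotropy.reg.RF}, whose conclusion is exactly that $\RFr$ is strongly isotropic.

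There is no real obstacle here; the corollary is a two-line composition of the two earlier results, and the only thing to check is that the hypotheses of Proposition~\ref{pro:GaussF} and Theorem~\ref{thm:isotropy.reg.RF} are genuinely satisfied in order. One minor point worth remarking on explicitly is that Gaussianity of $\RFo$ is used only to invoke the converse direction of Proposition~\ref{pro:GaussF}, and that the regularized field $\RFr$ is generally \emph{not} Gaussian, so we cannot (and need not) restate the conclusion in terms of 2-weak isotropy of $\RFr$.
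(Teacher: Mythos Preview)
Your proposal is correct and matches the paper's approach exactly: the paper simply states that the corollary follows from Theorem~\ref{thm:isotropy.reg.RF} and Proposition~\ref{pro:GaussF}, which is precisely the two-step composition you describe. Your additional remark that Gaussianity is used only to invoke Proposition~\ref{pro:GaussF} and is not expected of $\RFr$ is a helpful clarification beyond what the paper spells out.
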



\section{Approximation error of the regularized solution}\label{sec:err}
This section
estimates the approximation error of the sparsely regularized random field
from the observed random field, and gives one choice for the regularization
parameter $\lambda$.

Let $\{\coesho|\ell\in\Nz,\:m=-\ell,\dots,\ell\}$ and $\{\coeshr|\ell\in\Nz,\:m=-\ell,\dots,\ell\}$ be the Fourier coefficients for an observed random field $\RFo$ and the regularized field $\RFr$ on $\sph{2}$ respectively.

\begin{lemma} Let $\RFo$ be a random field in $\Lppsph{2}{2}$.
For any $\lambda>0$ and any positive sequence $\{\beta_{\ell}\}_{\ell=0}^{\infty}$,
let $\RFr$ be the regularized solution to the
{regularization problem (\ref{eq:model1}) with regularization parameter $\lambda$.}
Then $\RFr$ is in $\Lppsph{2}{2}$.	
\end{lemma}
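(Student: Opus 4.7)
The plan is to show $\|\RFr\|_{\Lppsph{2}{2}} \le \|\RFo\|_{\Lppsph{2}{2}}$, from which the conclusion $\RFr \in \Lppsph{2}{2}$ follows immediately since $\RFo \in \Lppsph{2}{2}$ by hypothesis. The reasoning is a pointwise-in-$\omega$ comparison of the regularized coefficients with the observed coefficients, combined with Parseval's identity on $\sph{2}$ and Fubini's theorem on the product space $\Omega\times\sph{2}$.

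First I would read off from Proposition~\ref{prop:reg.sol} (applied to each realization $\omega\in\Omega$) that, for every $\ell$ and every $m$,
\begin{equation*}
|\coeshr(\omega)| \le |\coesho(\omega)|,
\end{equation*}
because in the case $\Alo(\omega) > \lambda\beta_\ell$ the coefficient is multiplied by $\alpha_\ell(\omega) = 1 - \lambda\beta_\ell/\Alo(\omega) \in (0,1]$, and in the complementary case $\coeshr(\omega) = 0$. Squaring and summing over $m$ and $\ell$, Parseval's theorem (for the function $\RFr(\omega,\cdot)\in L_2(\sph{2})$, once this is known) gives
\begin{equation*}
\|\RFr(\omega,\cdot)\|_{L_2(\sph{2})}^2 = \sum_{\ell=0}^{\infty}\sum_{m=-\ell}^{\ell}|\coeshr(\omega)|^2 \le \sum_{\ell=0}^{\infty}\sum_{m=-\ell}^{\ell}|\coesho(\omega)|^2 = \|\RFo(\omega,\cdot)\|_{L_2(\sph{2})}^2.
\end{equation*}

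Next I would integrate this inequality over $\Omega$ with respect to $\probm$ and apply Fubini/Tonelli (justified since all integrands are nonnegative and measurable) to obtain
\begin{equation*}
\|\RFr\|_{\Lppsph{2}{2}}^2 = \int_{\Omega}\|\RFr(\omega,\cdot)\|_{L_2(\sph{2})}^2\, d\probm(\omega) \le \int_{\Omega}\|\RFo(\omega,\cdot)\|_{L_2(\sph{2})}^2\, d\probm(\omega) = \|\RFo\|_{\Lppsph{2}{2}}^2 < \infty.
\end{equation*}

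The only mildly delicate point is the measurability of $\RFr$ on $\cF\otimes\cB(\sph{2})$, which is required before one can speak of its $\Lppsph{2}{2}$ norm. I would handle this by noting that each $\Alo(\omega)$ is an $\cF$-measurable random variable (as a countable combination of the measurable coefficients $\coesho(\omega)$), hence so are the indicator $\mathbf{1}_{\{\Alo > \lambda\beta_\ell\}}$ and the factor $\alpha_\ell$; therefore each $\coeshr(\omega)$ is measurable, and the partial sums $\sum_{\ell=0}^N\sum_{m=-\ell}^\ell \coeshr(\omega)\shY(\PT{x})$ are $\cF\otimes\cB(\sph{2})$-measurable and converge in $\Lppsph{2}{2}$ by the Parseval bound above, giving a measurable limit $\RFr$. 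No step here is a genuine obstacle; the proof is essentially a contraction-in-$\ell_2$ observation packaged with Parseval and Fubini.
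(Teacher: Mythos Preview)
Your proof is correct and follows essentially the same route as the paper: both arguments observe that $|\coeshr|\le|\coesho|$ (equivalently $\alpha_\ell\in[0,1]$), then apply Parseval's identity and Fubini to bound $\norm{\RFr}{\Lppsph{2}{2}}$ by $\norm{\RFo}{\Lppsph{2}{2}}$. Your version is slightly more thorough in that you explicitly address the $\cF\otimes\cB(\sph{2})$-measurability of $\RFr$, which the paper's proof takes for granted.
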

\begin{proof}
	By \eqref{eq:alpha}, $0<\alpha_{\ell}\le1$ for $\ell\in\Gamma(\lambda)$. We now define $\alpha_{\ell}=0$ for $\ell\in \Gamma^{c}(\lambda)$, so that $\alpha_{\ell}\in[0,1]$ for all $\ell\in\Nz$. Since $\RFo$ is in $\Lppsph{2}{2}$, by Parseval's identity and Fubini's theorem,
\begin{align*}
	\norm{\RFr}{\Lppsph{2}{2}}^{2}=\expect{\norm{\RFr}{\Lp{2}{2}}^{2}}
	&=\expect{\sum_{\ell=0}^{\infty}\sum_{m=-\ell}^{\ell} |\coeshr|^2}
	=\expect{\sum_{\ell=0}^{\infty}\sum_{m=-\ell}^{\ell} |\alpha_{\ell}\coesho|^2}\\
	&\le\expect{\sum_{\ell=0}^{\infty}\sum_{m=-\ell}^{\ell} |\coesho|^2}
	=\expect{\norm{\RFo}{\Lp{2}{2}}^{2}}
	=\norm{\RFo}{\Lppsph{2}{2}}^{2}<\infty.
\end{align*}
Thus, $\RFr$ is in $\Lppsph{2}{2}$.
\end{proof}

The following theorem shows that the $\Lppsph{2}{2}$ error of the regularized solution can be arbitrarily small with an appropriate regularization parameter $\lambda$.
\begin{theorem}\label{thm:L2err.RFr}
Let $\RFo$ be a random field in $\Lppsph{2}{2}$.
For any $\eps>0$ and any positive sequence $\{\beta_{\ell}\}_{\ell=0}^{\infty}$,
let $\RFr$ be the regularized field of the solution to the regularization problem
{(\ref{eq:model1}) with regularization parameter satisfying}
$0\le\lambda<\frac{\eps}{2\sqrt{\sum_{\ell=0}^{\ell^{*}}\beta_{\ell}^{2}}}$, where $\ell^{*}$ is the smallest integer such that $\sum_{\ell>\ell^{*}} \expect{(\Alo)^2} \le \eps^{2}/4$, where $\Alo$ is given by \eqref{eq:Alo}. Then,
\begin{equation}\label{eq:Lppsph2.err}
	\norm{\RFo - \RFr}{\Lppsph{2}{2}}  < \eps.
\end{equation}
\end{theorem}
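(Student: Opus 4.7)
The plan is to start from the closed form for the discrepancy already established in equation~\eqref{eq:lamssq}, namely
\begin{equation*}
\|\vcoesh^{\mathrm{r}} - \vcoesho\|_2^{2}
= \lambda^{2}\!\!\sum_{\ell\in\Gamma(\lambda)}\!\!\beta_{\ell}^{2}
+ \sum_{\ell\in\Gamma^{c}(\lambda)}\!\!(\Alo)^{2},
\end{equation*}
which, by Parseval's identity, is exactly $\|\RFo-\RFr\|_{\Lp{2}{2}}^{2}$ for each realization. Taking expectations gives $\|\RFo-\RFr\|_{\Lppsph{2}{2}}^{2}$, so the whole task is to bound the right-hand side by $\eps^{2}$.

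The key observation is that on each degree $\ell$ the contribution is, realization-wise, the smaller of $\lambda^{2}\beta_{\ell}^{2}$ and $(\Alo)^{2}$: on $\Gamma(\lambda)$ the degree contributes $\lambda^{2}\beta_{\ell}^{2}$ but we also have $\lambda\beta_{\ell}<\Alo$ (so $\lambda^{2}\beta_{\ell}^{2}<(\Alo)^{2}$), while on $\Gamma^{c}(\lambda)$ the degree contributes $(\Alo)^{2}$ but we also have $\Alo\le\lambda\beta_{\ell}$ (so $(\Alo)^{2}\le\lambda^{2}\beta_{\ell}^{2}$). This lets me split the sum over $\ell$ at the deterministic cutoff $\ell^{\ast}$ and use the more convenient bound on each side, without having to keep track of the random sets $\Gamma(\lambda),\Gamma^{c}(\lambda)$ separately.

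Concretely, for $\ell\le\ell^{\ast}$ I will bound the contribution (from either set) by $\lambda^{2}\beta_{\ell}^{2}$, giving
\begin{equation*}
\sum_{\ell\le\ell^{\ast}}\lambda^{2}\beta_{\ell}^{2}
= \lambda^{2}\sum_{\ell=0}^{\ell^{\ast}}\beta_{\ell}^{2}
< \frac{\eps^{2}}{4\sum_{\ell=0}^{\ell^{\ast}}\beta_{\ell}^{2}}\cdot\sum_{\ell=0}^{\ell^{\ast}}\beta_{\ell}^{2}
= \frac{\eps^{2}}{4},
\end{equation*}
where the strict inequality uses the hypothesis on $\lambda$. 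For $\ell>\ell^{\ast}$ I will bound the contribution by $(\Alo)^{2}$, then interchange sum and expectation (Tonelli, since every term is non-negative) to obtain $\sum_{\ell>\ell^{\ast}}\bbE[(\Alo)^{2}]\le\eps^{2}/4$ by the definition of $\ell^{\ast}$. Adding the two pieces gives $\|\RFo-\RFr\|_{\Lppsph{2}{2}}^{2}\le\eps^{2}/2<\eps^{2}$, hence \eqref{eq:Lppsph2.err}.

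The only subtle point is the handling of the random index sets $\Gamma(\lambda)$ and $\Gamma^{c}(\lambda)$: I have to justify that the pointwise bound (for each $\omega$) $\lambda^{2}\beta_{\ell}^{2}\mathbf{1}_{\Gamma(\lambda)}(\ell)+(\Alo)^{2}\mathbf{1}_{\Gamma^{c}(\lambda)}(\ell)\le \min\bigl(\lambda^{2}\beta_{\ell}^{2},(\Alo)^{2}\bigr)$ is legitimate, and then that taking expectations preserves the inequality. This is a minor but essential bookkeeping step; after that the remaining estimates are routine.
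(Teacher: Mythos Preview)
Your proof is correct and follows essentially the same strategy as the paper's: both start from the closed-form discrepancy \eqref{eq:lamssq}, split at the deterministic cutoff $\ell^{\ast}$, and use the $\lambda^{2}\beta_{\ell}^{2}$ bound for $\ell\le\ell^{\ast}$ and the $(\Alo)^{2}$ tail for $\ell>\ell^{\ast}$. Your packaging via the pointwise identity $\lambda^{2}\beta_{\ell}^{2}\mathbf{1}_{\Gamma(\lambda)}(\ell)+(\Alo)^{2}\mathbf{1}_{\Gamma^{c}(\lambda)}(\ell)=\min\bigl(\lambda^{2}\beta_{\ell}^{2},(\Alo)^{2}\bigr)$ is in fact a little cleaner than the paper's argument, which keeps the random sets $\Gamma(\lambda),\Gamma^{c}(\lambda)$ separate throughout and ends up bounding four sub-sums rather than two (and thus reaches $\eps^{2}$ where you obtain $\eps^{2}/2$); note too that this identity is an \emph{equality}, so your ``subtle point'' is not an issue at all.
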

\begin{remark}
	The integer $\ell^{*}$ in the theorem exists as the series $\sum_{\ell=0}^{\infty}\expect{(\Alo)^2}=\norm{\RFo}{\Lppsph{2}{2}}^{2}$ is convergent.
\end{remark}

\begin{proof}
Using Fubini's theorem and the degree sets defined in \eqref{eq:Gamma},
we split the squared $\Lppsph{2}{2}$ error of the regularized field $\RFr$ 
as
\begin{align}\label{eq:I0}
 \norm{\RFo - \RFr}{\Lppsph{2}{2}}^{2}
 &= \expect{\norm{\RFo - \RFr}{\Lp{2}{2}}^{2}}\\
  &=
  \expect{\sum_{\ell=0}^{\infty}\sum_{m=-\ell}^{\ell} \bigl|\coesho - \coeshr\bigr|^2}\notag\\
   &=\expect{\sum_{\ell \in \Gamma^c(\lambda)} \sum_{m=-\ell}^{\ell} |\coesho|^2
    + \sum_{\ell \in \Gamma(\lambda)} 
   \sum_{m=-\ell}^{\ell} \bigl|(1-\alpha_{\ell}) \coesho\bigr|^2}\notag\\
   &=\expect{\sum_{\ell \in \Gamma^c(\lambda)} (\Alo)^2}
    + \expect{\sum_{\ell \in \Gamma(\lambda)}\bigl|(1-\alpha_{\ell}) \Alo\bigr|^2},\notag
\end{align}
where the second equality is by Parseval's identity, 
the third equality uses
equation \eqref{eq:Aellr} and the fourth equality 
uses \eqref{eq:Alo}.

Since $\RFo$ is in $\Lppsph{2}{2}$,
\begin{align*}
	\sum_{\ell=0}^{\infty} \expect{(\Alo)^2}=\norm{\RFo}{\Lppsph{2}{2}}^{2}<\infty,
\end{align*}
thus there exists the smallest integer $\ell^{*}$ such that
\begin{equation*}
	\expect{\sum_{\ell>\ell^{*}}(\Alo)^2}=\sum_{\ell>\ell^{*}}\expect{(\Alo)^2} \le \frac{\eps^{2}}{4}.
\end{equation*}
This shows that the first term of the right-hand side of \eqref{eq:I0} is bounded above by
\begin{align}
\expect{\sum_{\ell \in \Gamma^c(\lambda)}(\Alo)^2}
 &= \expect{\sum_{\ell \le \ell^{*}; \ell \in \Gamma^c(\lambda)} (\Alo)^2
      + \sum_{\ell> \ell^{*}; \ell \in \Gamma^c(\lambda)} (\Alo)^{2}} \notag\\
 &\le \expect{\sum_{\ell \le \ell^{*}; \ell \in \Gamma^c(\lambda)} (\Alo)^2}
      + \expect{\sum_{\ell> \ell^{*}} (\Alo)^{2}} \notag\\
 &\le \expect{\sum_{\ell\le\ell^{*}; \ell \in \Gamma^c(\lambda)}      (\Alo)^2}
      + \frac{\eps^{2}}{4}.\label{eq:I1}
\end{align}
For the first term of the right-hand side of \eqref{eq:I1}, we have $\Alo\le \lambda\beta_{\ell}$, and hence
\begin{align}\label{eq:I1.1}
\expect{\sum_{\ell\le\ell^{*};\ell \in \Gamma^c(\lambda)} (\Alo)^{2}}
\le \expect{\sum_{\ell\le\ell^{*};\ell \in \Gamma^c(\lambda)} (\lambda\beta_{\ell})^{2}}
&\le \lambda^2 \sum_{\ell=0}^{\ell^{*}} \beta^2_\ell
   < \frac{\eps^{2}}{4},
\end{align}
where we used the condition
\begin{equation*}
 \lambda <\frac{\eps}{2\sqrt{\sum_{\ell=0}^{\ell^{*}}\beta_{\ell}^{2}}}.
\end{equation*}

We now estimate the second term of the right-hand side of \eqref{eq:I0}. By \eqref{eq:alpha} for $\ell\in\Gamma(\lambda)$ we have $1-\alpha_{\ell}=\lambda\beta_{\ell}/\Alo \le 1$, thus
\begin{align*}
 \expect{\sum_{\ell \in \Gamma(\lambda)}\bigl| (1-\alpha_{\ell}) \Alo\bigr|^{2}}
 &\le \expect{\sum_{\ell\le\ell^{*};\ell\in\Gamma(\lambda)}(\lambda\beta_{\ell})^{2} + \sum_{\ell>\ell^{*};\ell\in\Gamma(\lambda)}(\Alo)^{2}}\notag\\
 &\le \sum_{\ell=0}^{\ell^{*}}\lambda^{2}\beta_{\ell}^{2} + \expect{\sum_{\ell>\ell^{*}}(\Alo)^2}
  <\frac{\eps^{2}}{4} + \frac{\eps^{2}}{4}
  < \frac{\eps^{2}}{2}.
\end{align*}
This with \eqref{eq:I1.1}, \eqref{eq:I1} and \eqref{eq:I0} gives \eqref{eq:Lppsph2.err}.
\end{proof}


\section{Scaling to preserve the $L_{2}$ norm}\label{sec:scaling}
The sparse regularization leads
to a reduction of the $L_2$-norm of the regularized field
from that of the observed field.
In this section, we scale the regularized field so that the $L_{2}$-norm of the resulting
field is preserved. 


By \eqref{eq:Aell} and Parseval's identity,
\begin{equation*}
\begin{array}{ll}
	\norm{\RFo}{\Lp{2}{2}}^{2}&=\displaystyle\sum_{\ell=0}^{\infty}
            \sum_{m=-\ell}^{\ell}|\coesho|^{2}=\sum_{\ell=0}^{\infty}(\Alo)^{2},\\[5mm]
	\norm{\RFr}{\Lp{2}{2}}^{2}&=\displaystyle\sum_{\ell=0}^{\infty}
          \sum_{m=-\ell}^{\ell}|\coeshr|^{2}=\sum_{\ell=0}^{\infty}(\Alr)^{2}.
\end{array}
\end{equation*}


For each realization $\RFo(\omega)$, $\omega\in\probSp$ of an observed field $\RFo$, we define a new random variable,
the \emph{scaling (factor) for the $L_{2}$ norm}, by
\begin{equation}\label{eq:scalf}
	\scalf:=\scalf(\omega):=\scalf(\RFo(\omega),\RFr(\omega)):=\frac{\norm{\RFo(\omega)}{\Lp{2}{2}}}{\norm{\RFr(\omega)}{\Lp{2}{2}}}=\sqrt{\sum_{\ell=0}^{\infty}(\Alo)^{2}/\sum_{\ell=0}^{\infty}(\Alr)^{2}}.
\end{equation}
Then, for the same realization, we scale up the regularized field $\RFr$ by multiplying
by the factor $\scalf$ to obtain
\begin{equation*}
	\widetilde{\RFr} := \scalf\: \RFr.
\end{equation*}
We say the resulting field $\widetilde{\RFr}$ is the \emph{scaled regularized field} of $\RFo$ for the parameter choices $\lambda$ and $\{\beta_{\ell}\}_{\ell=0}^{\infty}$.


\section{Numerical experiments}\label{sec:num}

In this section, we use cosmic microwave background (CMB) data on $\sph{2}$,
see for example \cite{Planck2016I},
to illustrate the regularization algorithm.

\subsection{CMB data}\label{sec:CMBdata}

The CMB data giving the sky temperature of cosmic microwave background are available on $\sph{2}$
at HEALPix points
(Hierarchical Equal Area isoLatitude Pixelation)
\footnote{\url{http://healpix.sourceforge.net}} \cite{Gorski_etal2005}. These points provide
an equal area partition of $\sph{2}$ and are equally spaced on rings of constant latitude.
This enables the use of fast Fourier transform (FFT) techniques for spherical harmonics.

In the experiments, we use the CMB map
with $N_{\rm side} = 2048$, giving $N_{\rm pix}=12\times 2048^2=50,331,648$ HEALPix points,
see \cite{Planck2016IX}, as computed by SMICA \cite{CaLeDeBePa2008}, a component separation
method for CMB data processing, see Figure~\ref{fig:CMB_Original}.
In this map the mean $a^{\rm o}_{0,0}$ and first moments $a^{\rm o}_{1,m}$, for $m=-1,0,1$ are set to zero.
A CMB map can be modelled as a realization of a strongly isotropic
random field $\cmbRF$ on $\sph{2}$.
\begin{figure}[ht]
\centering
\scalebox{0.6}{
\includegraphics[trim = 0mm 0mm 0mm 0mm, width=\textwidth]{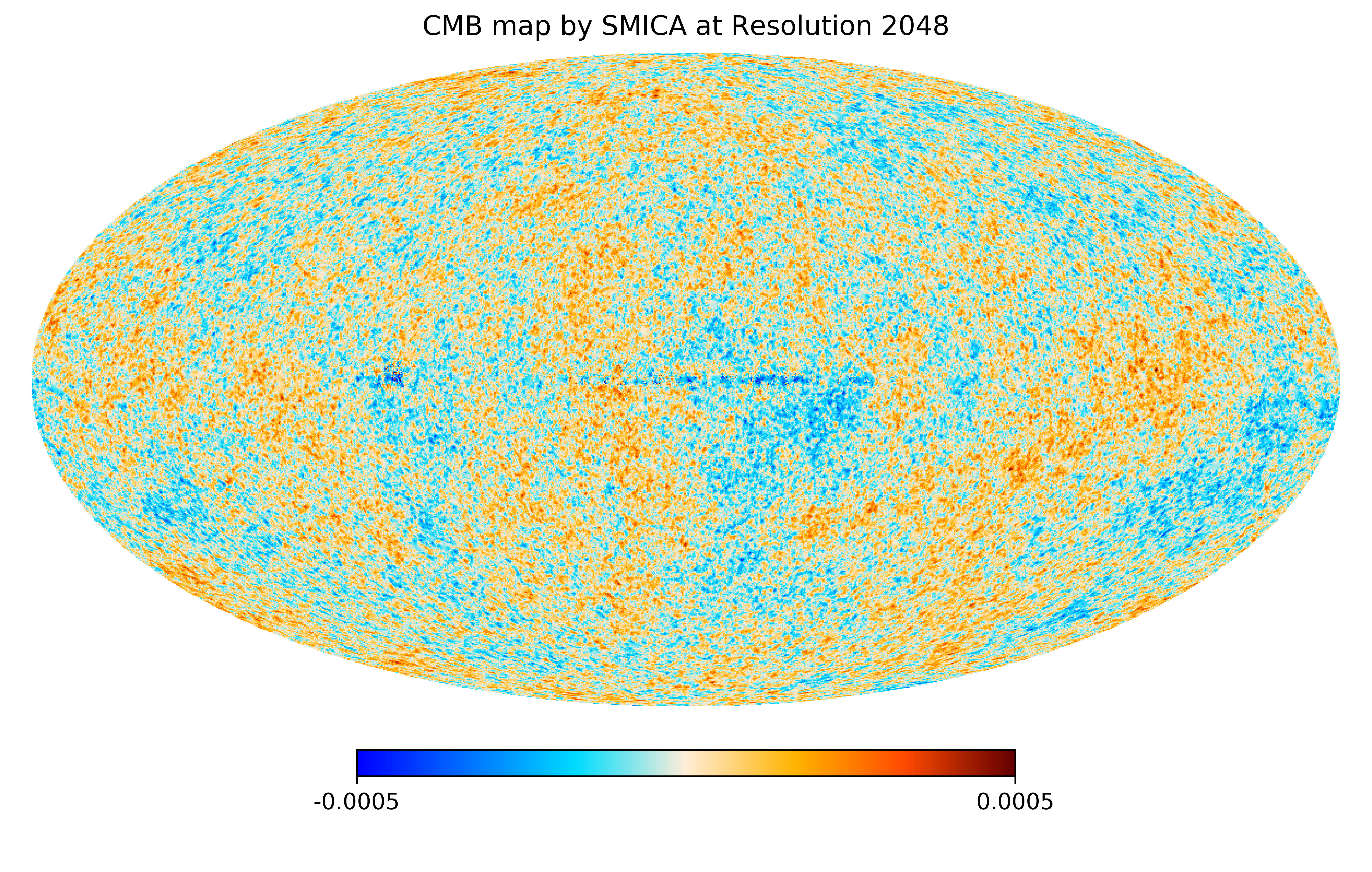}
}
\caption{The CMB data with $N_{\rm side} = 2048$ as computed by SMICA.}
\label{fig:CMB_Original}
\end{figure}

\subsection{Analysis of the CMB data}

The Python HEALPy package \cite{Gorski_etal2005} was
used to calculate the Fourier
coefficients $\coesho$ of the observed field,
using an equal weight quadrature rule at the HEALPix points. This instance of CMB
data is band-limited with maximum degree $L=4,000$, thus
\begin{equation*}
\RFo =	\tRFo =\sum_{\ell=0}^{\trdeg}\sum_{m=-\ell}^{\ell}\coesho\shY.
\end{equation*}
The observed $\Alo$ given by \eqref{eq:Alo} for $\ell=0,\dots,L$
are shown on a logarithmic scale in Figure~\ref{fig:CMB_Alo} for degree $\ell$ up to $4,000$.

Once $\lambda$ and $\beta_\ell$ are chosen we easily calculate $a_{\ell,m}^r$ and $A_\ell^r$ using Proposition
\ref{prop:reg.sol}, and so obtain the regularized field
\begin{equation*}
\RFr =	\tRFr =\sum_{\ell=0}^{\trdeg}\sum_{m=-\ell}^{\ell}\coeshr\shY,
\end{equation*}
again with the use of the HEALPy package.

%
\begin{figure}[ht]
\centering
\scalebox{0.5}{
\includegraphics[trim = 0mm 0mm 0mm 0mm, width=\textwidth]{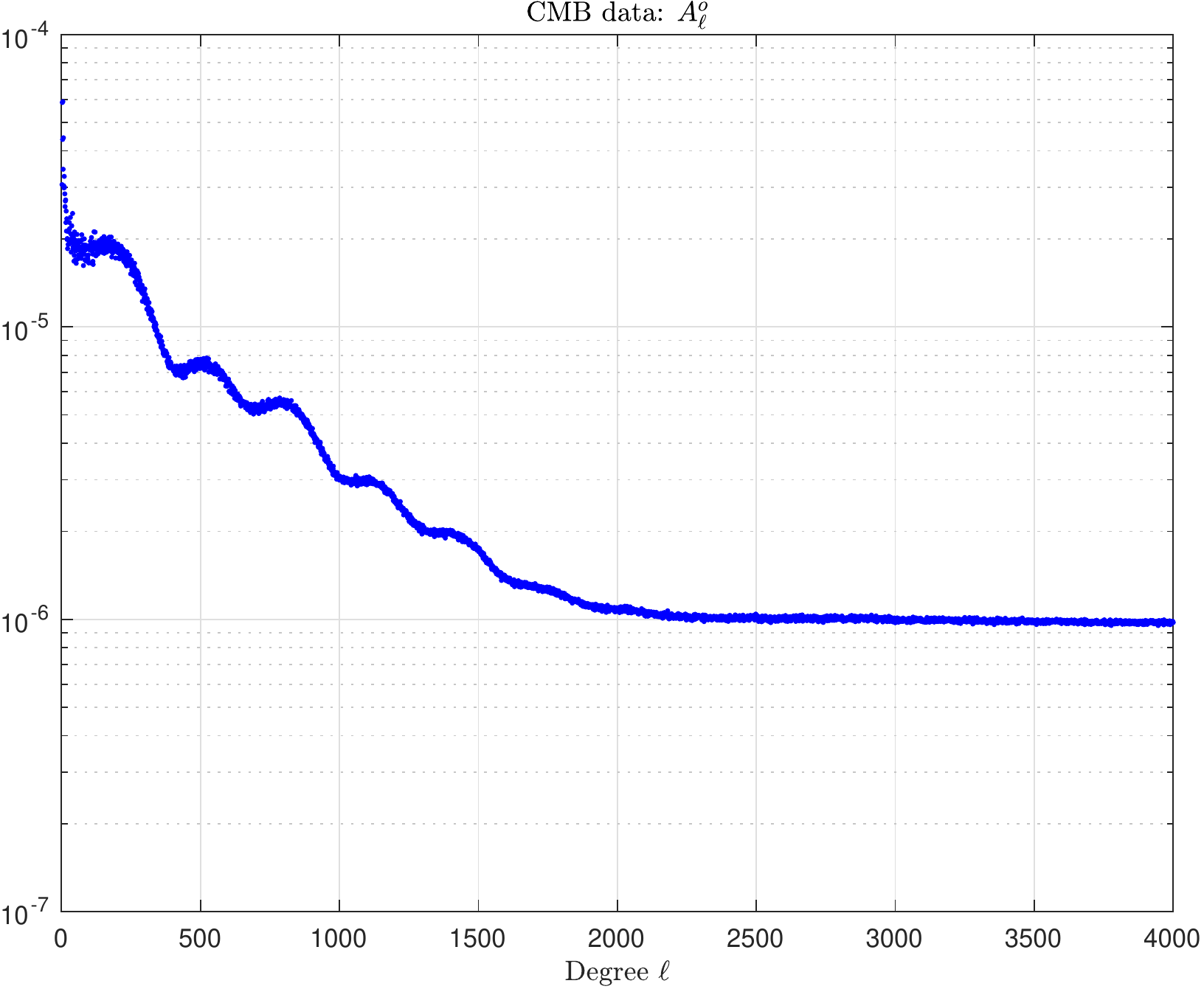}
}
\caption{The observed field $\Alo$.}\label{fig:CMB_Alo}
\end{figure}
\subsection{Choosing the degree scaling parameters $\beta_\ell$}\label{sec:beta}
The degree scaling parameters $\beta_{\ell}$ can be chosen to reflect the decay of the angular
power spectrum of the observed data.
For the CMB data in Figure~\ref{fig:CMB_Alo} there is
remarkably little decay in $\Alo$ for degrees $\ell$
between $2,000$ and $4,000$,
so we choose $\beta_{\ell}=1$ for $\ell=0,\dots,L$.
Note that, if the true data correspond to a field
that is not band-limited but has finite $L_2(\sph{2})$ norm,
then $\Alo$ must eventually decay, and decaying
$\beta_{\ell}$ would then be
appropriate for $\ell>L$.

\subsection{Choosing the regularization parameter $\lambda$}\label{sec:eff}
Now we turn to the choice of the regularization parameter $\lambda$.
We recall from Proposition~\ref{prop:reg.sol} that $\Alr/\Alo$ depends directly on
the ratio $\Alo/(\lambda \beta_\ell)$, and that $\Alr = 0$ if the latter ratio is $\le 1$,
or, since we have chosen $\beta_\ell=1$, if $\Alo \le \lambda$.
It is therefore very clear from Figure~\ref{fig:CMB_Alo} that the sparsity
(i.e. the percentage of the coefficients $\coeshr$ that are zero) will depend
sensitively on the choice of $\lambda$. In Figure~\ref{fig:CMBpowspec} we
illustrate the effect of two choices of $\lambda$ on the computed values of $\Alr$.
In the left panel within Figure~\ref{fig:CMBpowspec} the choice is
$\lambda = 1.05\times 10^{-6}$,
while in the right panel the value of $\lambda$ is $9.75\times 10^{-7}$, about $7\%$ smaller.
In the right panel, the sparsity is less than $10\%$,
whereas on the left it is $72.1\%$.  This means that of the original coefficients (more than $16$ million of them)
 only $4.5$ million are now non-zero.

\begin{figure}[ht]
 \centering
  \begin{minipage}{\textwidth}
  \centering
\begin{minipage}{0.485\textwidth}
\centering
  \includegraphics[trim = 0mm 0mm 0mm 0mm, width=\textwidth]{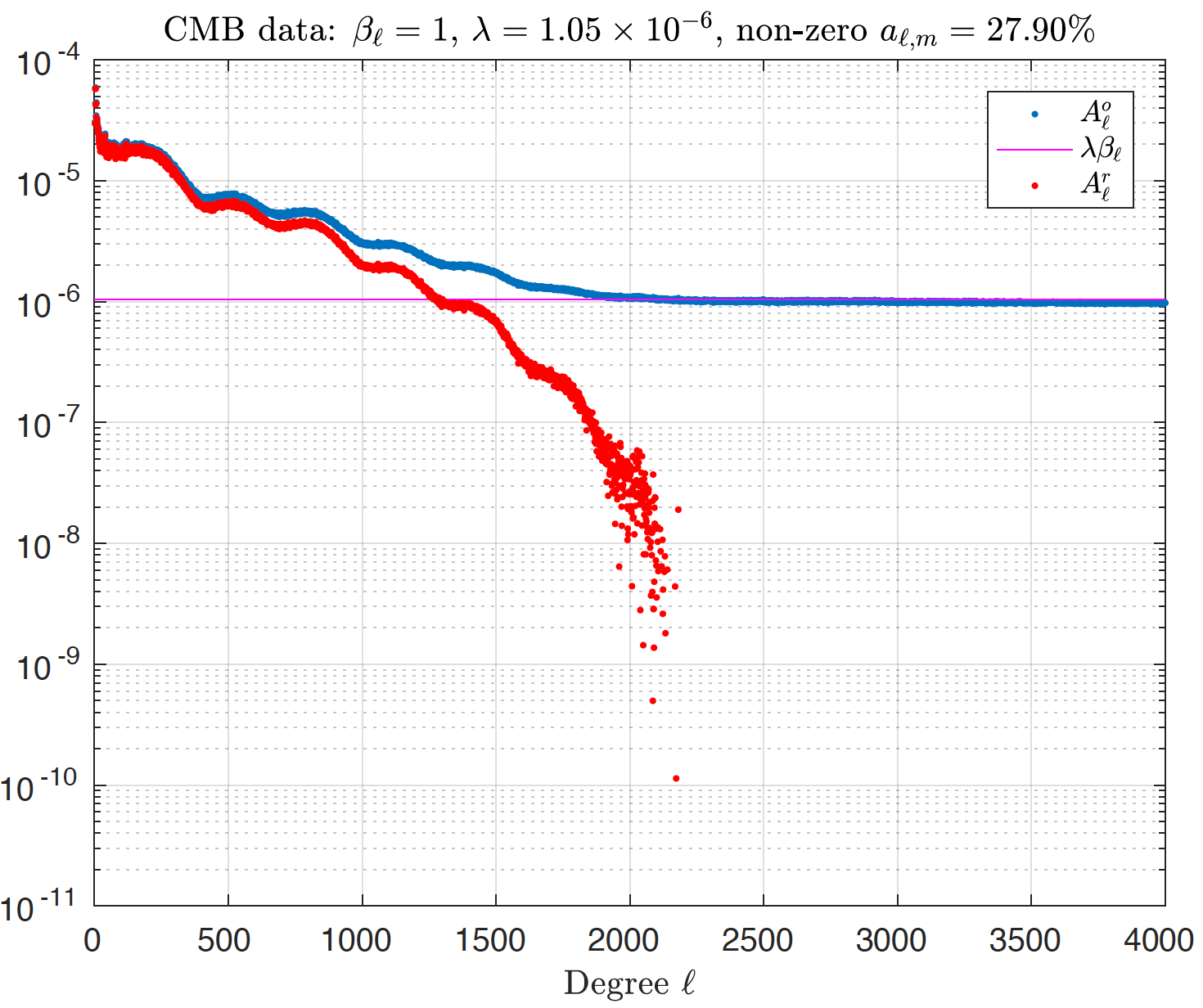}\\
\end{minipage}
\hspace{0.1cm}
\begin{minipage}{0.485\textwidth}
\centering
  \includegraphics[trim = 0mm 0mm 0mm 0mm, width=\textwidth]{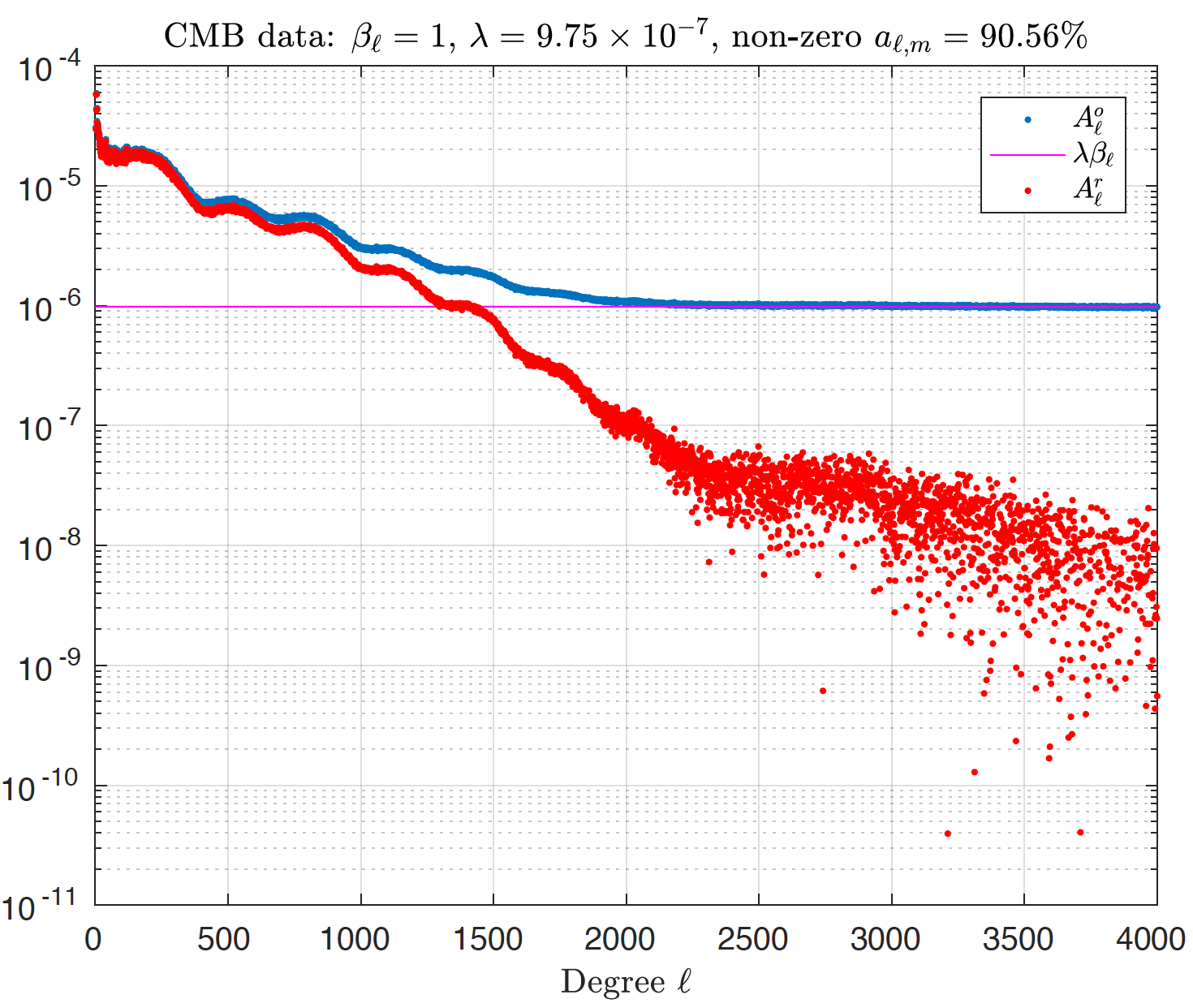}\\
\end{minipage}
  \begin{minipage}{0.9\textwidth}
  \caption{The regularized field $\Alr$ with $\beta_{\ell}=1$, and with
$\lambda=1.05\times10^{-6}$ (left graph)
and $\lambda=9.75\times10^{-7}$ (right graph).}\label{fig:CMBpowspec}
  \end{minipage}
\end{minipage}
\end{figure}

\subsection{Efficient frontier}
A more systematic approach to choosing the regularization
parameter $\lambda$ is to
make use of the Pareto efficient
frontier \cite{OsbPT2000a,DauFL2008,vdBerFri2009}.
The efficient frontier of the multi-objective problem with two objectives,
$\| \vcoesh \|_{1,2}$ and $\| \vcoesh - \vcoesho\|_2^2$,
is the graph obtained by plotting the optimal values
of these two quantities on the $y$ and $x$ axes respectively
as $\lambda$ varies.
As illustrated in the left figure in Figure~\ref{fig:CMBpareto}
for the CMB data,
the graph of the efficient frontier is in this case a continuous
piecewise quadratic,
with knots when the number of degrees $\ell$ with $\Alo/\beta_\ell > \lambda$
changes, that is when the degree set $\Gamma(\lambda)$ changes. In the
figure, $\lambda$ is increasing from left (when $\lambda=0$) to right
where $\vcoeshr$ vanishes at $\lambda= 5.89\times10^{-5}$. The point on
the graph when $\lambda=1.48 \times 10^{-5}$
is shown in the figure. At this value of $\lambda$ the discrepancy
$\|\vcoeshr-\vcoesho\|_2^2$ has the value $10^{-7}$, while
$\|\vcoeshr \|_{1,2}= \sum_{\ell=0}^\infty \beta_\ell A_\ell$ has the value $\kappa=1.21 \times 10^{-3}$.
\begin{figure}[ht]
  \centering
  \begin{minipage}{\textwidth}
  \centering	
  \begin{minipage}{\textwidth}
  \centering
  \begin{minipage}{0.485\textwidth}
  \includegraphics[trim = 0mm 1mm 0mm 6mm, width=1\textwidth]{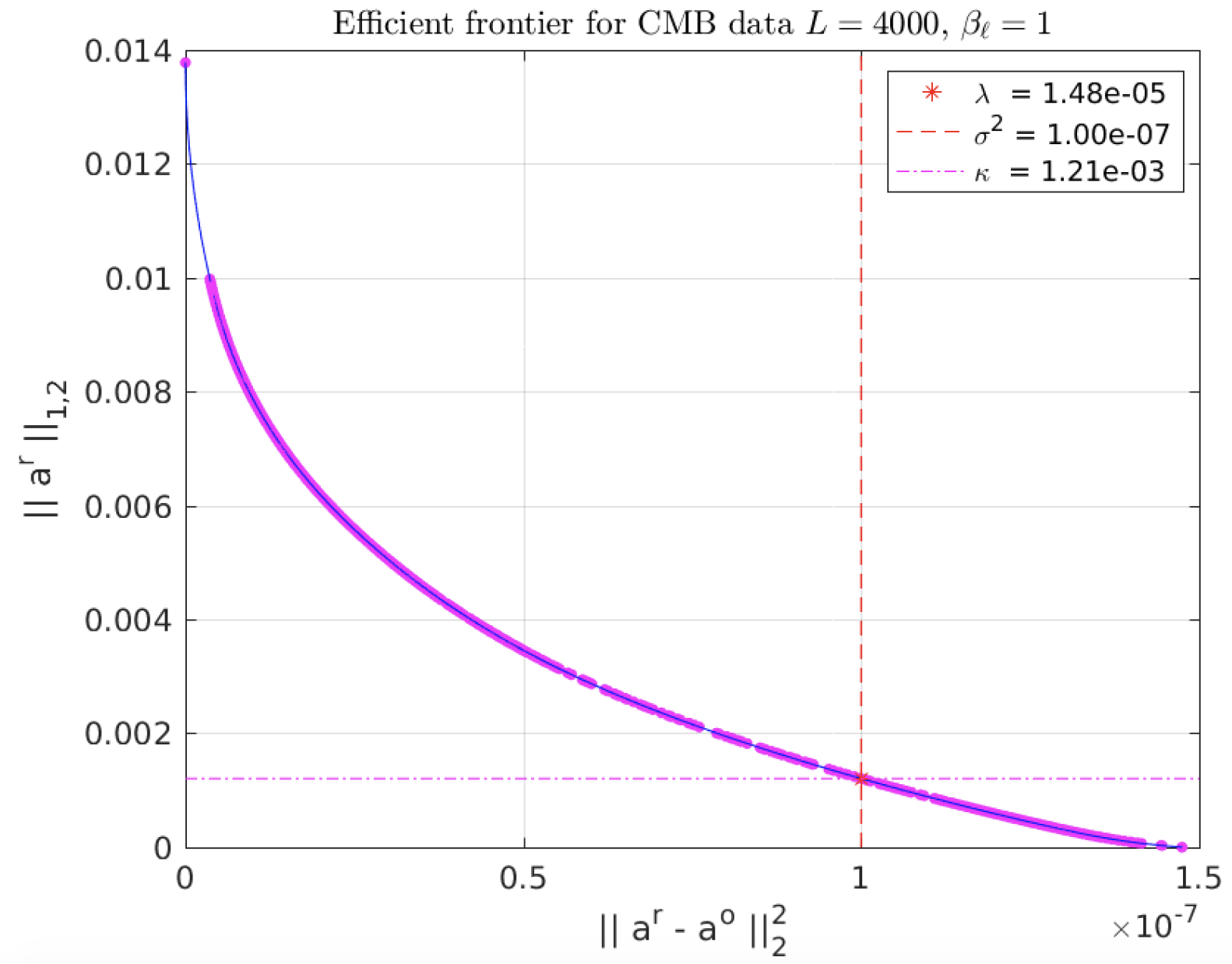}
  \end{minipage}
  \hspace{0.1cm}
  \begin{minipage}{0.485\textwidth}
  \includegraphics[width=\textwidth]{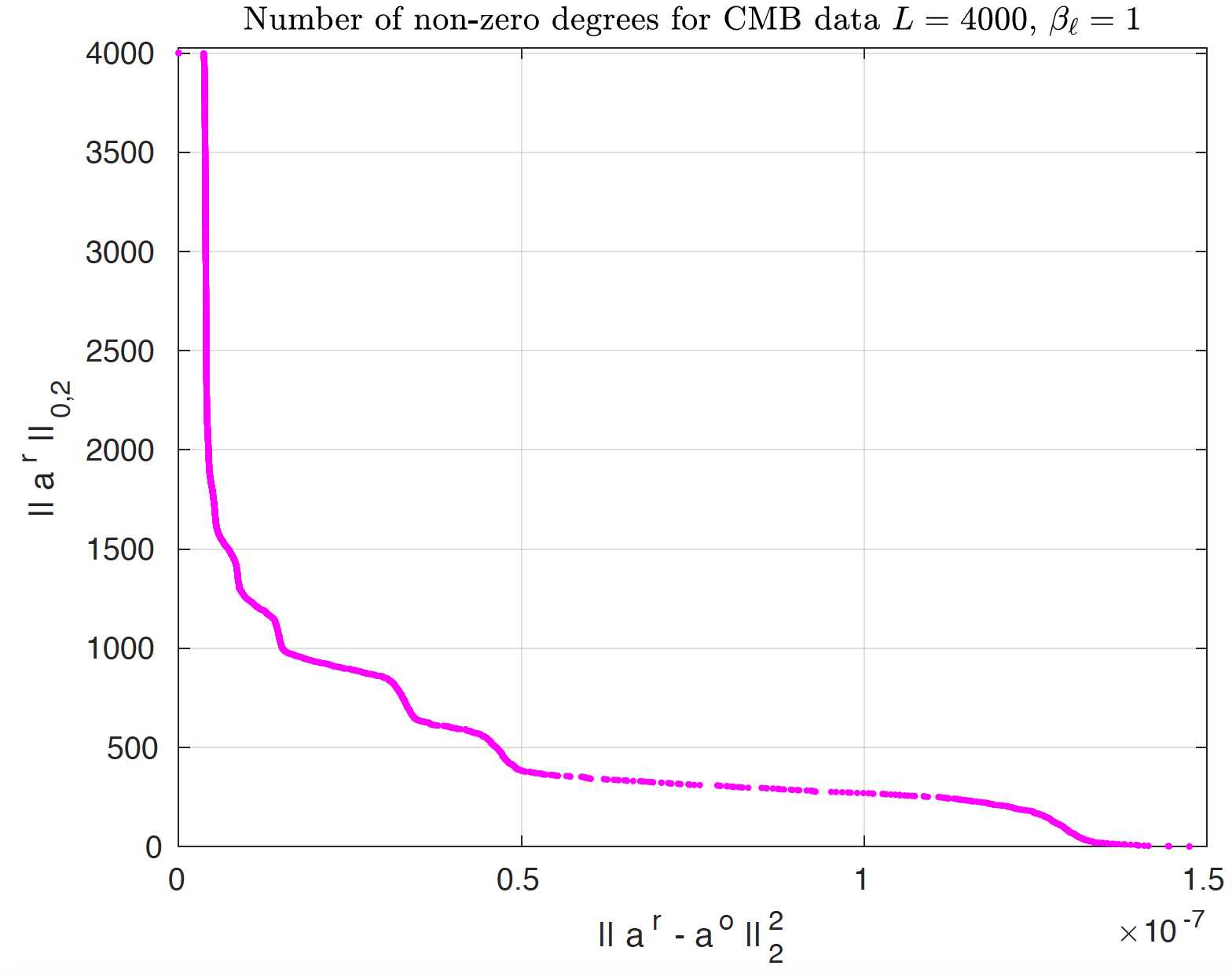}
  \end{minipage}\vspace{-2mm}
  \end{minipage}\\[2mm]
  \begin{minipage}{0.9\textwidth}
  \caption{Efficient frontiers of $\norm{\vcoeshr}{1,2}$ (left) and $\norm{\vcoeshr}{0,2,L}$ (right) against discrepancy $\norm{\vcoeshr - \vcoesho}{2}^2$ on the CMB data for $\beta_{\ell}=1$ and $L=4,000$.}\label{fig:CMBpareto}
  \end{minipage}
  \end{minipage}
\end{figure}

The idea of the efficient frontier is that each point on the frontier
corresponds to an optimal solution for some $\lambda$,
while points above the frontier are feasible but not optimal. At points on the frontier,
one objective can be improved only at the expense of making the other worse.
The appendix shows how to determine the corresponding value of
the regularization
parameter $\lambda$ given either $\sigma$ or $\kappa$ for models (\ref{eq:model2})
or (\ref{eq:model3}).
One can specify the value of $\lambda$ or the discrete discrepancy $\norm{\vcoeshr -\vcoesho}{2}^{2}$ (equivalent to specifying $\sigma$ in \eqref{eq:model2}) or the norm $\norm{\vcoeshr}{1,2}$ (equivalent to specifying $\kappa$ in \eqref{eq:model3}).

In the right figure in Figure~\ref{fig:CMBpareto}, we plot the $\ell_{0}$-norm
defined by
\begin{equation*}
  \norm{\vcoeshr}{0,2,L} := \sum_{\ell=0}^L 1_{\{ {\Alr>0}\}} =\# \Gamma(\lambda),
\end{equation*}
so $\| \vcoeshr \|_{0,2,L}$ counts the number of degrees $\ell=0,\ldots,L$
with at least one non-zero coefficient,
against $\norm{\vcoeshr - \vcoesho}{2}^2$ to more directly compare sparsity and data fitting.
This is a piecewise constant graph
with discontinuities at the values of $\lambda$ when the degree set $\Gamma(\lambda)$ changes.
From this graph it is clear that high sparsity (or small $\ell_0$ norm) implies large discrepancy of the regularized field.
\subsection{Scaling to preserve the $L_2$ norm}
\begin{figure}[ht]
 \centering
  \begin{minipage}{\textwidth}
  \centering
\begin{minipage}{0.485\textwidth}
\centering
 \includegraphics[trim = 0mm 0mm 0mm 0mm, width=\textwidth]{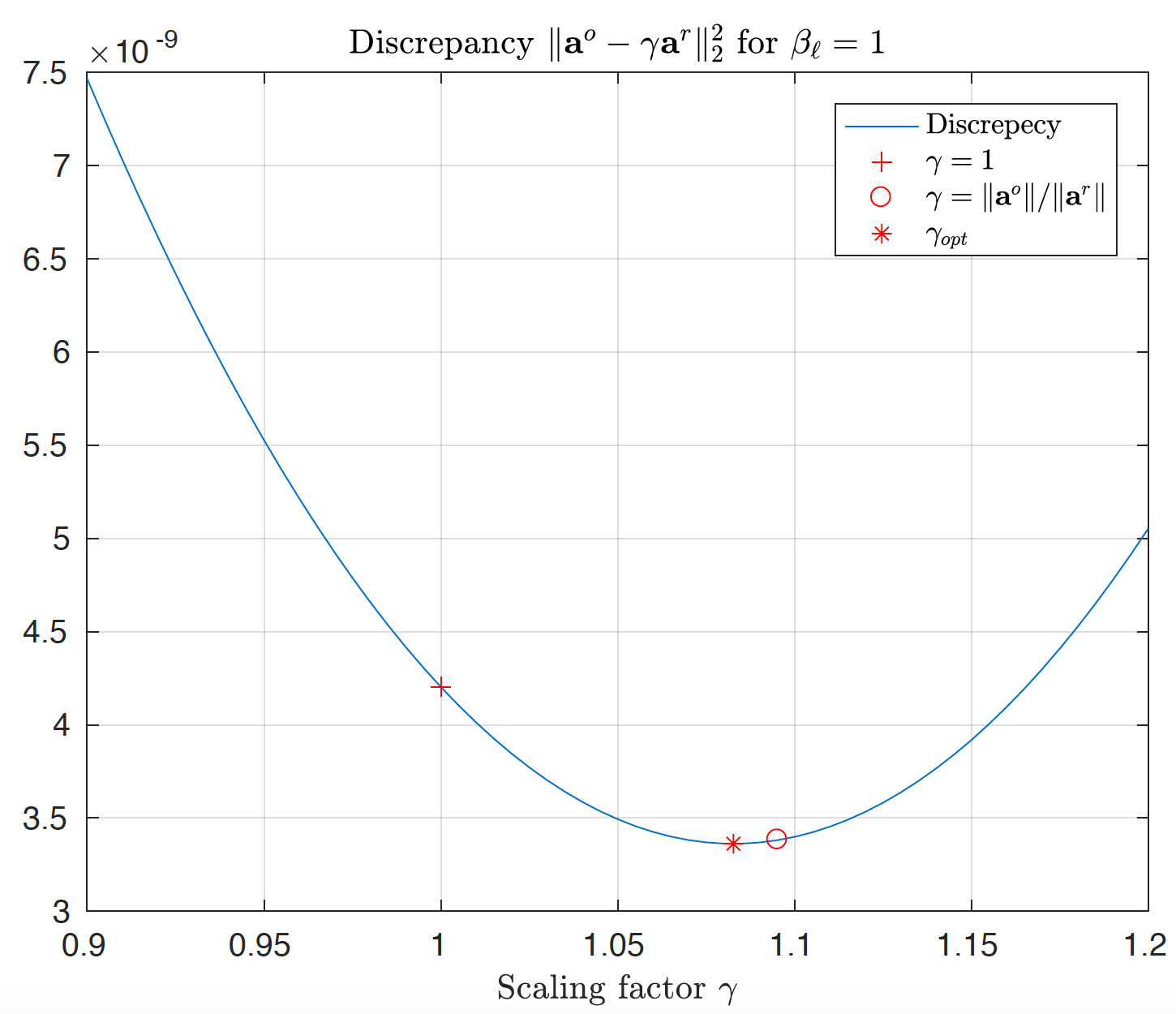}\\[1mm]
\end{minipage}
\begin{minipage}{0.01\textwidth}
\end{minipage}
  \begin{minipage}{0.9\textwidth}
  \caption{The (quadratic) curve of discrepancy $\norm{\vcoesho-\gamma\vcoeshr}{2}^{2}$ with respect to norm-scaling factor $\gamma$ for $\beta_{\ell}=1$ for
the CMB data with $\lambda=1.05\times10^{-6}$.
The circle corresponds to the value of $\gamma$ for which
$\|\gamma \vcoeshr \|_2 = \| \vcoesho\|_2$. The star corresponds
to the value $\gamma_{\rm opt}$ for which the discrepancy is minimized.}
\label{fig:CMBdiscrep}
  \end{minipage}
\end{minipage}
\end{figure}

The scaling factor $\gamma$ can be chosen as in \eqref{eq:scalf} so that the $L_{2}$ norms of the observed data and regularized solution are equal.

Figure~\ref{fig:CMBdiscrep} illustrates the relation between the scaling factor
$\gamma$ and the discrepancy $\norm{\vcoesho-\gamma\vcoeshr}{2}^{2}$ for the
CMB data
with $\lambda=1.05\times10^{-6}$, corresponding to the left panel in Figure~\ref{fig:CMBpowspec}.
The choice of $\gamma$ in \eqref{eq:scalf} that equates the $L_2$-norms $\|\vcoesho\|_2$ and $\|\gamma \vcoeshr\|_2$ makes the discrepancy $\norm{\vcoesho-\gamma\vcoeshr}{2}^{2}$ close to the optimal choice
in the sense of minimizing the discrepancy.
It also shows that $\gamma=1$ (no scaling) 
gives a much larger discrepancy.

\subsection{Errors and sparsity for the regularized CMB field}\label{sec:CMB}
Table~\ref{tab:no.l2nrm.coeff.CMB} gives errors and sparsity results for the regularized CMB field. Included for comparison is the
Fourier reconstruction of degree $L=4,000$ (with $(L+1)^{2}$ coefficients $\coesh$, $m=-\ell,\dots,\ell$, $\ell=0,\dots,L$),
for which the errors should be zero in the absence of rounding errors. For the regularized field the computations use $\beta_{\ell}=1$
for $\ell=0,\dots,L$, and two values of the regularization parameter, namely
 $\lambda=1.056\times10^{-6}$ and $\lambda=9.75\times10^{-7}$
as used in Figure~\ref{fig:CMBpowspec}, and the errors are given for both the unscaled case (i.e. with $\gamma=1$) and scaled with $\gamma$
chosen as in \eqref{eq:scalf} to equate the $L_{2}$-norms of the observed and
regularized fields. The sparsity is the percentage of the regularized coefficients $\coeshr$
which are zero. The $L_{2}$ errors are estimated
by equal weight quadrature at the HEALPix points, while the $L_{\infty}$ errors are estimated by the maximal absolute error at the HEALPix points.


\begin{table}[htb]
\centering
\begin{minipage}{0.98\textwidth}
\centering
\scriptsize
\begin{tabular}{l*{12}{c}c}
\toprule
     &   \multirow{2}{*}{Fourier} & \multicolumn{2}{c}{$\lambda=1.05e{-6}$} & &\multicolumn{2}{c}{$\lambda=9.75e{-7}$} \\ \cline{3-4}\cline{6-7}
     &                         & Unscaled regularized & Scaled regularized  & & Unscaled regularized & Scaled regularized \\
\midrule
Sparsity  &   $0\%$   & $72.1\%$   &    $72.1\%$    &&    $9.44\%$ & $9.44\%$ \\
Scaling $\scalf$    &   - &   $1$   & $1.0953$  &&    $1$   &   $1.0888$  \\
 $L_2$ errors  &   $8.02e{-12}$   &   $6.48e{-05}$   &   $5.82e{-05}$
    &&   $6.16e{-05}$  & $5.54e{-05}$  \\
 $L_{\infty}$ errors  &   $6.34e{-11}$   &   $1.52e{-03}$   &  $1.50e{-03}$    &&   $1.47e{-03}$   & $1.44e{-03}$  \\
\bottomrule
\end{tabular}
\end{minipage}
\begin{minipage}{0.9\textwidth}
\vspace{3mm}
\caption{Sparsity and estimated $L_2$ and $L_{\infty}$ errors for the regularized fields, both scaled and unscaled,
from the CMB data using $\beta_{\ell}=1$, degree $L=4,000$, and two values of $\lambda$.}
\label{tab:no.l2nrm.coeff.CMB}
\end{minipage}
\end{table}

  Figures~\ref{fig:CMB.scale.beta1_lam1.05e-06} and \ref{fig:CMB.scale.beta1_lam1.05e-06.err} show respectively
  the realization of the scaled regularized field and its pointwise errors 
  with $\beta_{\ell}=1$, $\lambda=1.05\times10^{-6}$ and $\gamma\approx1.0953$, the first parameter
  choice in Table \ref{tab:no.l2nrm.coeff.CMB}. This regularized field uses only
  $27.90\%$ of the coefficients in the Fourier approximation.
  Figures~\ref{fig:CMB.scale.beta1_lam9.75e-07} and \ref{fig:CMB.scale.beta1_lam9.75e-07.err} show the realization of the scaled regularized field and its errors for the second parameter choice in Table \ref{tab:no.l2nrm.coeff.CMB}, which uses $90.56\%$ of the coefficients.

\begin{figure}[ht]
  \centering
  \begin{minipage}{\textwidth}
  \centering
  \begin{minipage}{\textwidth}
  \begin{minipage}{0.485\textwidth}
  \centering
  \includegraphics[trim = 0mm 0mm 0mm 0mm, width=1.03\textwidth]{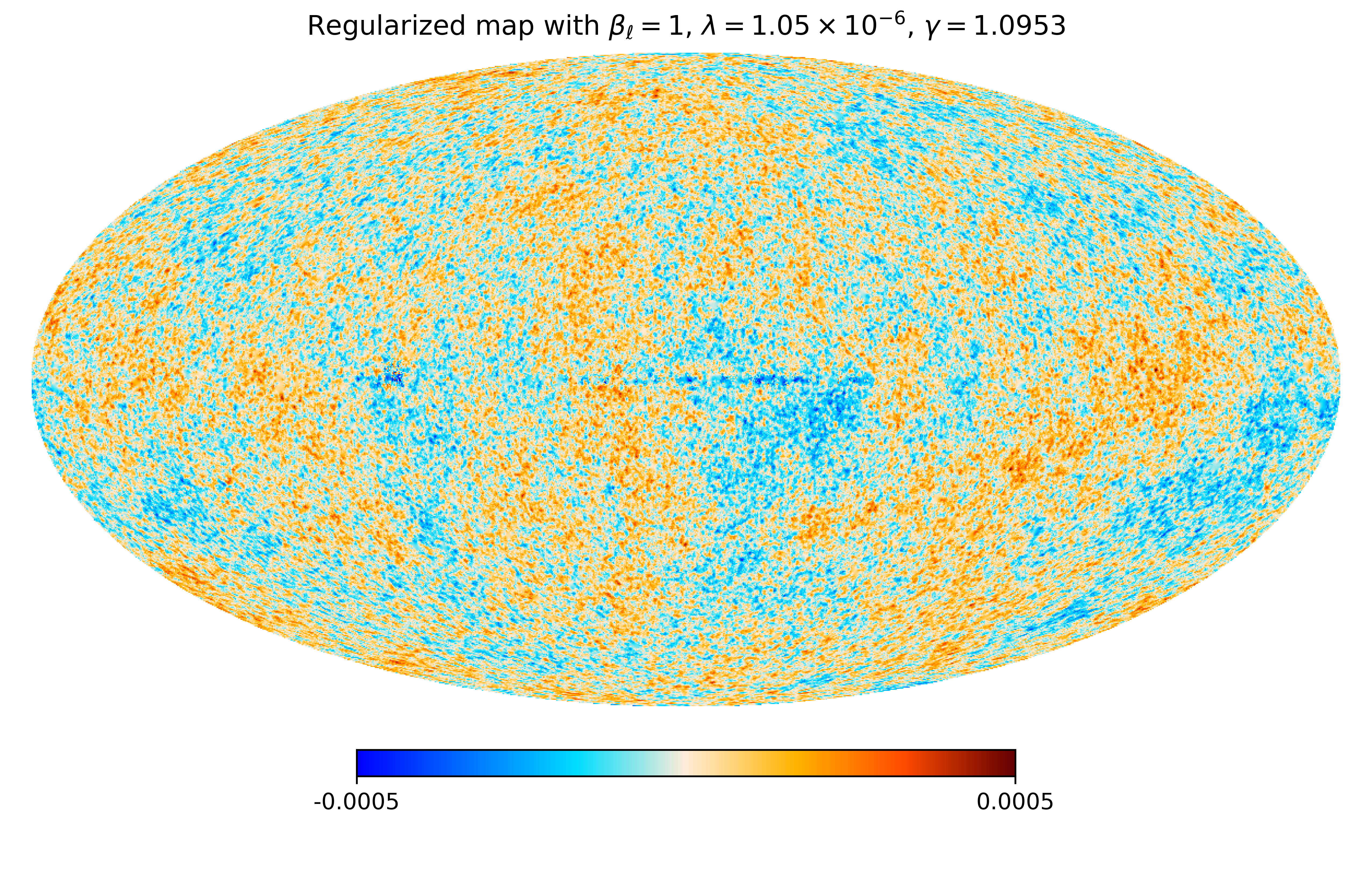}\\[-4mm]
  \subcaption{Scaled regularized field, $\lambda=1.05\times10^{-6}$, $\gamma=1.0953$}\label{fig:CMB.scale.beta1_lam1.05e-06}
  \end{minipage}
  \hspace{0.0\textwidth}
  \begin{minipage}{0.485\textwidth}
  \centering
  \includegraphics[trim = 0mm 0mm 0mm 0mm, width=1.03\textwidth]{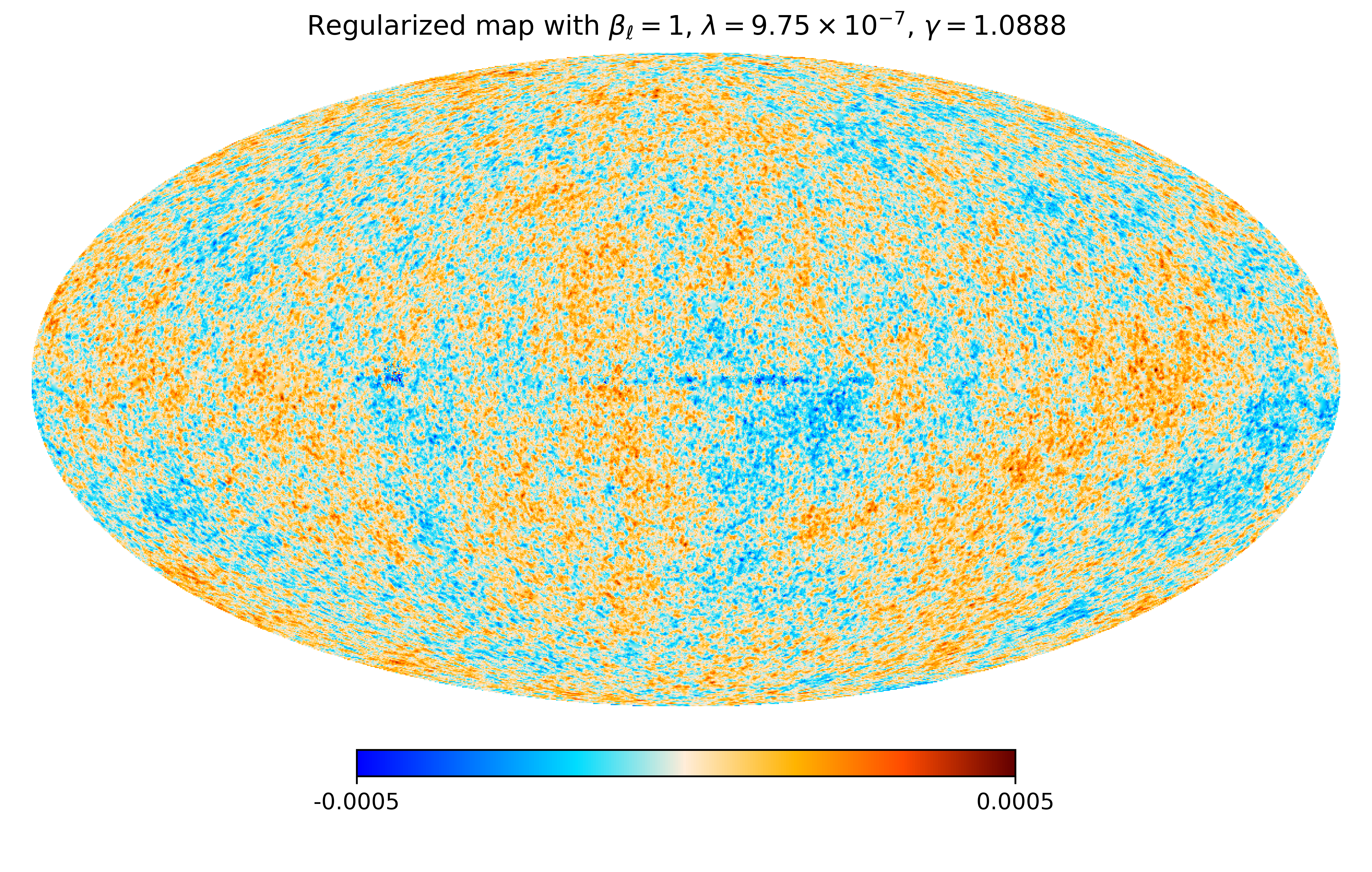}\\[-4mm]
  \subcaption{Scaled regularized field, $\lambda=9.75\times10^{-7}$, $\gamma=1.0888$}\label{fig:CMB.scale.beta1_lam9.75e-07}
  \end{minipage}
  \end{minipage}\\[4mm]
   \begin{minipage}{\textwidth}
  \begin{minipage}{0.485\textwidth}
  \centering
 \includegraphics[trim = 0mm 0mm 0mm 0mm, width=1.03\textwidth]{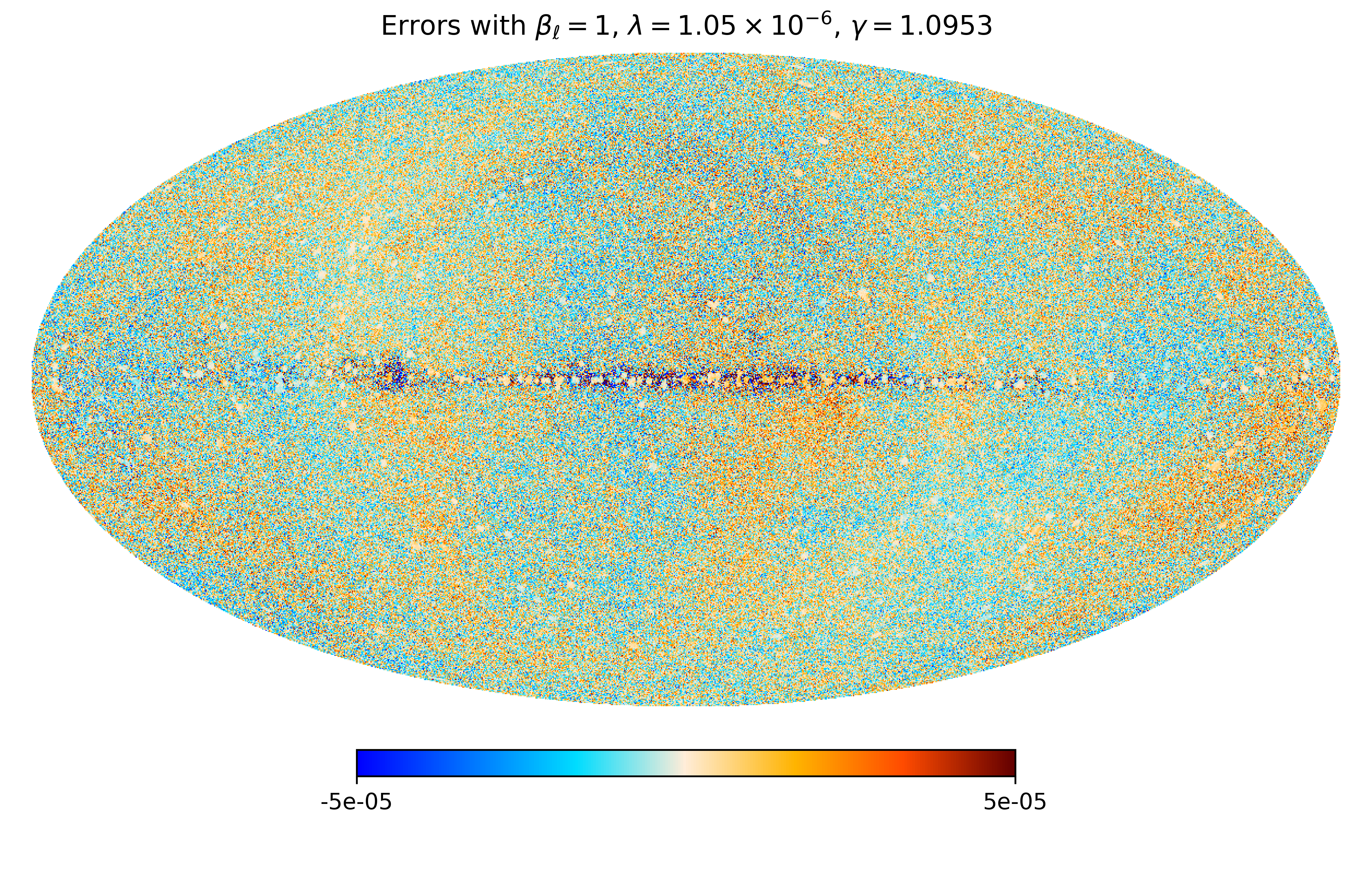}\\[-4mm]
\subcaption{Errors of (a)}\label{fig:CMB.scale.beta1_lam1.05e-06.err}
  \end{minipage}
  \hspace{0.0\textwidth}
  \begin{minipage}{0.485\textwidth}
  \centering
 \includegraphics[trim = 0mm 0mm 0mm 0mm, width=1.03\textwidth]{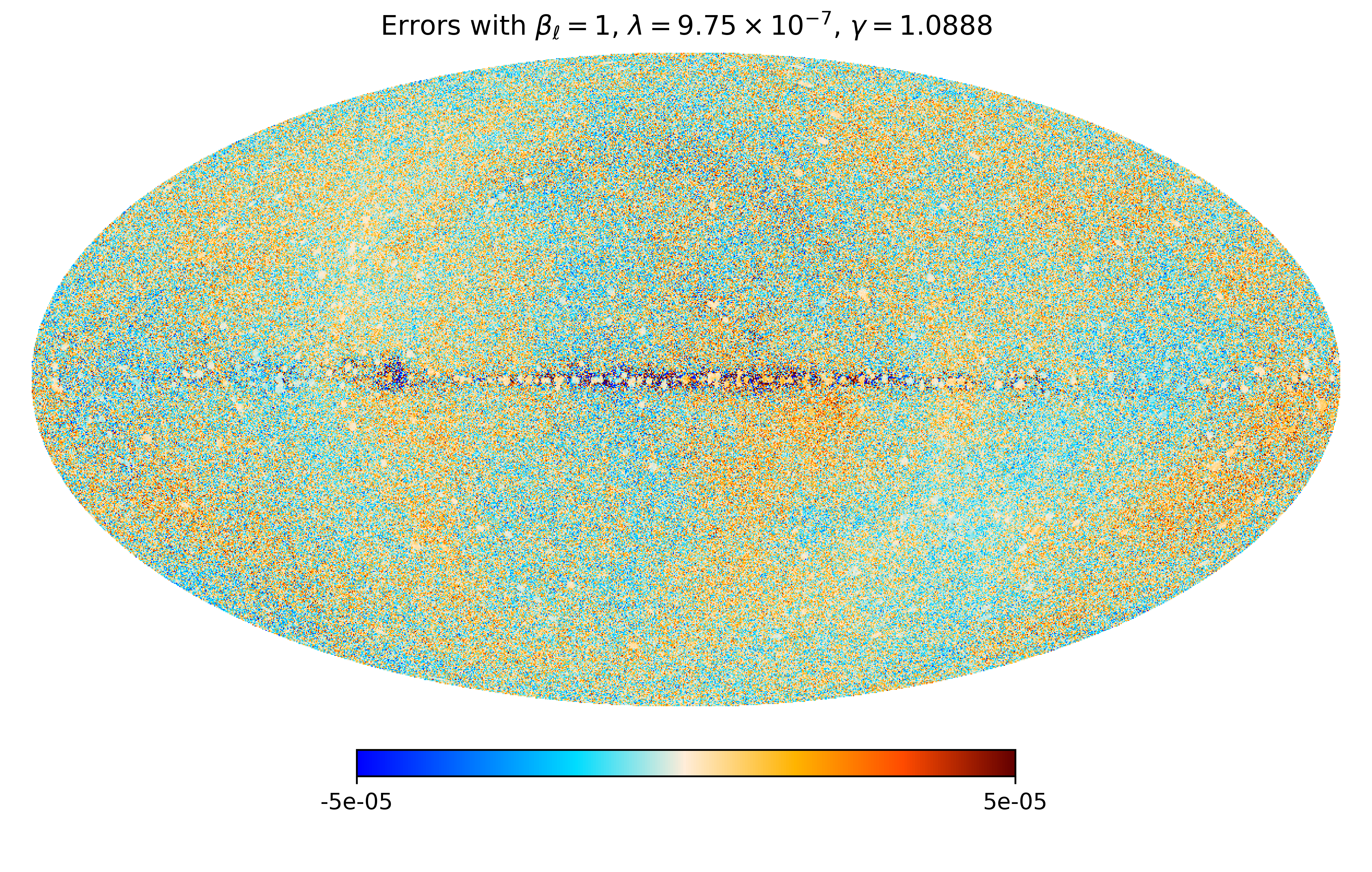}\\[-4mm]
  \subcaption{Errors of (b)}\label{fig:CMB.scale.beta1_lam9.75e-07.err}
  \end{minipage}
  \end{minipage}
  \end{minipage}\\
  \begin{minipage}{0.9\textwidth}
\vspace{3mm}
  \caption{\scriptsize
  (a) and (b) show the realizations of the scaled regularized random field for $\beta_{\ell}=1$
  with $\lambda=1.05\times10^{-6}$ and $\lambda=9.75\times10^{-7}$ from the original CMB field, truncation degree $L=4,000$;
(c) and (d) show the pointwise errors of (a) and (b) respectively, with the range of the color map one tenth of that in (a) and (b).}\
\label{fig:CMB.reg.beta1}
\end{minipage}
\end{figure}

The errors in Figure \ref{fig:CMB.reg.beta1} should be
considered in relation to the
$\Lp{2}{2}$ and $\Lp{\infty}{2}$ norms of the original CMB field,
which are $3.84e{-04}$ and $1.86e{-03}$ respectively.  (The latter number implies that there are points of the original map corresponding to
Figure \ref{fig:CMB_Original} with values that exceed the limits of the color map by a factor of nearly 4. However, points exceeding the limits of the color map are relatively rare.)
We can observe that the magnitudes of the pointwise errors in Figures~\ref{fig:CMB.reg.beta1}
are mostly an order of magnitude smaller than
the magnitude of the corresponding fields. The largest errors occur near the equator where the original CMB map was masked and then inpainted using other parts of the data, see \cite{Planck2016IX}. Outside the region near
the equator the errors in Figures~\ref{fig:CMB.scale.beta1_lam1.05e-06.err} and \ref{fig:CMB.scale.beta1_lam9.75e-07.err} vary from place to place but on
the whole are uniformly distributed.

Table~\ref{tab:no.l2nrm.coeff.CMB} and Figure~\ref{fig:CMB.reg.beta1} show that our appropriate choice of the regularization parameter $\lambda$
can make the errors of the scaled regularized field sufficiently small.
Moreover, the larger of the two choices of $\lambda$
significantly increases the sparsity while only slightly increasing the approximation error.


\section*{Acknowledgements}
We are grateful for helpful discussions with Professor Marinucci regarding strong isotropy of random fields on spheres.
We are also grateful for the use of data from the Planck/ESA mission, downloaded from the Planck Legacy Archive. This research includes extensive computations using the Linux computational cluster Katana supported by the Faculty of Science, The University of New South Wales, Sydney. We acknowledge support
from the Australian Research Council under Discovery Project DP180100506.
The last author was supported under the Australian Research Council's 
Discovery Project DP160101366.


\section*{References}

\bibliographystyle{abbrv}
\bibliography{sparse_r}


\appendix
\section{Relation to constrained models}
Consider, for simplicity, the case when $\RF$ is band-limited with maximum degree $L$.
When their constraints are active, the two constrained
models (\ref{eq:model2}) and (\ref{eq:model3}) are equivalent
to the regularized model (\ref{eq:model1}).
This equivalence is detailed below, where we also show how to calculate
the value of the regularization parameter corresponding to
an active constraint, see (\ref{eq:siglam}) and (\ref{eq:kaplam}) below.

Consider the optimization problem (\ref{eq:model2}) with the
data fitting constraint $\| \vcoesh - \vcoesho \|_2^2 \leq \sigma^2$.
Introducing a Lagrange multiplier $\mu\in\mathbb{R}$ for the constraint,
the optimality conditions are, using (\ref{eq:normderiv}),
\begin{equation}
  \begin{array}{ll}
 \beta_\ell A_\ell^{-1} \coesh  + 2 \mu (a_{\ell, m} - \coesho) = 0, \quad
     m = -\ell,\ldots,\ell & \mbox{when } A_\ell > 0, \\
     \coesh = 0, \quad m = -\ell,\ldots,\ell & \mbox{when } A_\ell = 0, \\ 
    \norm{\vcoesh - \vcoesho}{2}^2 - \sigma^2 \leq 0,
     & \mbox{primal feasibility}, \\
    \mu \geq 0, & \mbox{dual feasibility}, \\ 
    \mu \left(\norm{\vcoesh - \vcoesho}{2}^2 - \sigma^2\right) = 0,
     & \mbox{complementarity}. 
     \end{array}
     \label{eq:opt2}
\end{equation}
If $\sigma^2 \geq \| \vcoesho \|_2^2$, the unique solution is $\vcoesh = \mathbf{0}$,
that is $\coesh = 0, \; m = -\ell,\ldots,\ell, \; \ell=0,\dots,L$.
On the other hand if $\sigma = 0$, the unique solution is $\vcoesh = \vcoesho$,
so $\coesh = \coesho, \; m = -\ell,\ldots,\ell, \; \ell=0,\dots,L$.

Comparing the optimality conditions (\ref{eq:opt1a}) and the first equation of (\ref{eq:opt2})
shows that, for $\mu > 0$,
\[
   \lambda = \frac{1}{2\mu}.
\]
In terms of $\mu \geq 0$ we define the degree sets, similarly to
 \eqref{eq:Gamma}, by
\begin{equation*}
  \widetilde{\Gamma}(\mu) :=
\left\{0 \leq \ell \leq L: \frac{\beta_\ell}{2 \Alo} < \mu\right\}
=\Gamma(\lambda),\qquad
  \widetilde{\Gamma}^{c}(\mu) := \left\{0\leq\ell\leq L: \frac{\beta_\ell}{2 \Alo} \geq \mu\right\} = \Gamma^{c}(\lambda).
\end{equation*}
For $\mu = 0$, $\widetilde{\Gamma}(0) = \emptyset$,
while for $\mu > \max_{0\le \ell\le L} \frac{\beta_\ell}{2 \Alo}$,
the index set $\widetilde{\Gamma}^{c}(\mu) = \emptyset$.
The optimality condition (\ref{eq:opt2}) gives,
for all $\ell\in\widetilde{\Gamma}(\mu)$ with $A_\ell > 0$,
\begin{equation*}
  \coesh = \alr \coesho, \quad m = -\ell,\ldots,\ell,
  \quad \mbox{where} \;\;
  \alr = \frac{A_\ell}{A_\ell + \beta_\ell/(2\mu)}.
\end{equation*}
As before, $0 < \alr < 1$, $A_\ell =  \alr  \Alo$, so
\begin{equation*}
  A_{\ell} = \left\{ \begin{array}{cl}
  \displaystyle \Alo - \beta_\ell/(2\mu) & \quad\mbox{for }
  \ell\in \widetilde{\Gamma}(\mu), \\[1ex]
  0 & \quad\mbox{for } \ell\in \widetilde{\Gamma}^{c}(\mu).
  \end{array}\right.
\end{equation*}
Given that the constraint is active,
the value of $\lambda$ corresponding to $\sigma$ can be found by solving, see (\ref{eq:lamssq}),
\begin{equation}\label{eq:siglam}
      \lambda^2 \sum_{\ell\in\Gamma(\lambda)} \beta_\ell^2 =
   \sigma^2 - \sum_{\ell\in\Gamma^{c}(\lambda)} (\Alo)^2 .
\end{equation}
The only issue here is finding the sets $\Gamma(\lambda)$ and $\Gamma^c(\lambda)$ when we start from the model \eqref{eq:model2}.
As they only change when $\lambda$ is $\frac{\Alo}{\beta_\ell}$,
this can be done by sorting the values $\frac{\Alo}{\beta_\ell}$,
and finding the largest value of $\lambda' = \frac{\Alo[\ell']}{\beta_{\ell'}}$
such that $\| \vcoesh - \vcoesho \|_2^2 \leq \sigma^2$.
And then solving (\ref{eq:siglam}) using $\Gamma(\lambda')$ and $\Gamma^c(\lambda')$.

Consider now the LASSO type model (\ref{eq:model3}) with a constraint
 $\| \vcoesh \|_{1,2}\leq \kappa$.
Introducing a Lagrange multiplier $\nu\in\mathbb{R}$ for the constraint,
the optimality conditions are, again using (\ref{eq:normderiv}),
\begin{equation}
  \begin{array}{ll}
   (\coesh - \coesho) + \nu \beta_\ell  A_\ell^{-1} \coesh  = 0, \quad
     m = -\ell,\ldots,\ell & \mbox{when } A_\ell > 0, \\
     \coesh = 0, \quad m = -\ell,\ldots,\ell & \mbox{when } A_\ell = 0, \\ 
    \norm{\vcoesh}{1,2} - \kappa \leq 0,
     & \mbox{primal feasibility}, \\
    \nu \geq 0, & \mbox{dual feasibility}, \\ 
    \nu \left( \norm{\vcoesh}{1,2} - \kappa \right) = 0,
     & \mbox{complementarity}. 
     \end{array}
     \label{eq:opt3}
\end{equation}
If $\kappa \geq \norm{\vcoesh}{1,2}$ then the solution is $\vcoesh = \vcoesho$
with $\nu =0$.
If $\kappa = 0$ then the solution is $\vcoesh = \mathbf{0}$.
The first equation in (\ref{eq:opt3}), for $A_\ell > 0$, gives
\begin{equation*}
  \coesh = \alr \coesho, \quad m = -\ell,\ldots,\ell,
  \quad \mbox{where} \;\;
  \alr = \frac{1}{1 + \nu \beta_\ell A_\ell^{-1}} = \frac{A_\ell}{A_\ell + \nu \beta_\ell}.
\end{equation*}
When the constraint is active and $\nu > 0$,
comparing the first equation in (\ref{eq:opt3}) and (\ref{eq:opt1a}),
shows that $\lambda = \nu$.
Given a value for $\kappa$ with $0 < \kappa < \norm{\vcoesho}{1,2}$,
the corresponding value of $\lambda$ satisfies, see (\ref{eq:lamnrm})
\begin{equation}\label{eq:kaplam}
  \lambda \sum_{\ell\in\Gamma(\lambda)} (\beta_\ell)^2 =
  \sum_{\ell\in\Gamma(\lambda)} \beta_\ell \Alo - \kappa.
\end{equation}
Again, the only issue is first determining the set $\Gamma(\lambda)$,
defined in (\ref{eq:Gamma}),
which can be done by sorting the $\frac{\Alo}{\beta_\ell}$
to find the smallest value $\lambda^* = \frac{\Alo[\ell^*]}{\beta_{\ell^*}}$
such that $\| \vcoesh \|_{1,2} \geq \kappa$, and then solving (\ref{eq:kaplam})
using $\Gamma(\lambda^*)$.

\end{document}